\documentclass[11pt,a4paper]{amsart}
\usepackage{a4wide}
\usepackage[T1]{fontenc}
\usepackage[utf8]{inputenc}
\usepackage{lmodern}
\usepackage{amssymb}

\usepackage{url}
\usepackage[colorlinks=true,linkcolor=blue,citecolor=blue,urlcolor=blue]{hyperref}
\hypersetup{pdftitle={Ultrafilter characters and local pi-bases},pdfauthor={Jerzy Kąkol}}

\newtheorem{maintheorem}{Theorem}

\newtheorem{theorem}{Theorem}[section]
\newtheorem{proposition}[theorem]{Proposition}
\newtheorem{lemma}[theorem]{Lemma}
\newtheorem{corollary}[theorem]{Corollary}
\theoremstyle{definition}

\newtheorem{question}[theorem]{Question}
\theoremstyle{remark}
\newtheorem{remark}[theorem]{Remark}

\newcommand{\cont}{\mathfrak c}
\newcommand{\U}{\mathcal U}
\newcommand{\F}{\mathcal F}
\newcommand{\G}{\mathcal G}
\newcommand{\B}{\mathcal B}

\newcommand{\pc}{\pi\chi}
\newcommand{\pcl}{\pi\chi_l}
\newcommand{\pcc}{\pi\chi_c}
\newcommand{\pcd}{\pi\chi_d}
\DeclareMathOperator{\cf}{cf}
\DeclareMathOperator{\niso}{ni}

\title[Ultrafilter characters and local $\pi$-bases]{Ultrafilter characters and local $\pi$-bases}
\author{Jerzy K\k{a}kol}
\address{Faculty of Mathematics and Computer Science\\
Adam Mickiewicz University\\
Uniwersytetu Pozna\'nskiego 4\\
61-614 Pozna\'n, Poland}
\email{kakol@amu.edu.pl}
\date{October 1, 2026}
\subjclass[2020]{Primary 54A25, 54A35; Secondary 03E17, 54D10}
\keywords{Local $\pi$-base, centered local $\pi$-character, decreasing local $\pi$-character, ultrafilter character, one-point filter space}

\begin{document}
\begin{abstract}
We completely resolve Question~5.3 of Kania's paper \cite{Kania} and establish a more general realization theorem. We show that for infinite cardinals $\mu\leqslant\nu<\kappa<\lambda$, a space with
\[
\pc(X)=\mu,\qquad\pcl(X)=\nu,\qquad
\pcc(X)=\kappa,\qquad\pcd(X)=\lambda
\]
exists if and only if $\kappa\leqslant2^\mu$. The witnesses may be zero-dimensional and Tychonoff spaces, of the least possible cardinality $\lambda$. They may have the least possible number of non-isolated points, or alternatively be crowded. The main ingredient is an exact computation of the centered local $\pi$-character of star filter spaces in terms of ultrafilter character, obtained from a finite selection lemma for filter pushforwards. We also construct a single space with countable $r$-linked local $\pi$-bases for every finite $r$, centered value $\omega_1$, and any prescribed decreasing value greater than $\omega_1$.

\end{abstract}
\maketitle

\section{Introduction}

A local $\pi$-base at a point $p$ of a space $X$ is a family of non-empty open sets such that every neighborhood of $p$ contains a member of the family. Its members need not contain $p$. Requiring intersections of these sets to remain non-empty supplies some of the coherence of a neighborhood base. Carlson \cite{Carlson} introduced linked, centered and decreasing local $\pi$-characters and applied them to cardinality bounds for Hausdorff spaces.

We write $\pc(p,X)$ for the ordinary local $\pi$-character. A family is \emph{linked} if any two members intersect, \emph{centered} if every finite subfamily has non-empty intersection, and \emph{$r$-linked}, for $2\leqslant r<\omega$, if this holds for every subfamily with at most $r$ members. The least infinite cardinalities of linked, centered and decreasing local $\pi$-bases are denoted by $\pcl(p,X)$, $\pcc(p,X)$ and $\pcd(p,X)$, respectively. A decreasing family is linearly ordered by inclusion. The last invariant is used only when such a family exists. Global invariants are suprema of pointwise values, and $\pcd(X)$ is used only when it is defined everywhere. Finite witnesses give value $\omega$. Thus
\[
\pc(X)\leqslant\pcl(X)\leqslant\pcc(X)\leqslant\pcd(X)
\]
whenever the last term is defined.

Kania \cite{Kania} answered Carlson's separation question by constructing in ZFC a zero-dimensional Tychonoff space with one non-isolated point and
\[
\pc(X)=\pcl(X)=\omega<\pcc(X)=\omega_1<\pcd(X)=\omega_2.
\]
He combines a countable star filter with a matrix filter. The star component supplies a countable linked base and obstructs small centered bases; the matrix component fixes the larger values. His argument extends to regular uncountable $\kappa<\lambda$ under $\kappa\leqslant\mathfrak p$, where $\mathfrak p$ is the pseudointersection number. Question~5.3 of \cite{Kania} asks for the corresponding realization without this additional hypothesis.

Carlson's general inequality
\begin{equation}\label{eq:carlson}
\pcc(X)\leqslant2^{\pc(X)}
\end{equation}
shows that $\kappa\leqslant\cont=2^\omega$ is necessary in that question. We prove sufficiency, remove the regularity assumption on $\kappa$, allow singular global decreasing values, and separate ordinary from linked local $\pi$-character. Let $\niso(X)$ be the number of non-isolated points of $X$.

Our first main result  is the following
\begin{maintheorem}\label{main:spectrum}
Let $\mu\leqslant\nu<\kappa<\lambda$ be infinite cardinals. The following are equivalent:
\begin{enumerate}
\item $\kappa\leqslant2^\mu$;
\item there is a space $X$ such that every point admits a decreasing local $\pi$-base and
\begin{equation}\label{eq:target}
\pc(X)=\mu,\qquad\pcl(X)=\nu,\qquad
\pcc(X)=\kappa,\qquad\pcd(X)=\lambda.
\end{equation}
\end{enumerate}
When these conditions hold, for each fixed integer $r\geqslant2$ there is a zero-dimensional Tychonoff witness of cardinality $\lambda$, with an $r$-linked local $\pi$-base of cardinality at most $\nu$ at every point, and
\[
\niso(X)=
\begin{cases}
1,&\lambda\text{ is regular},\\
\cf(\lambda),&\lambda\text{ is singular}.
\end{cases}
\]
These cardinalities are least possible among all spaces satisfying \eqref{eq:target}. The same four invariant values can also be realized by a crowded zero-dimensional Tychonoff space of cardinality $\lambda$.
\end{maintheorem}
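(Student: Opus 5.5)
The implication (2)$\Rightarrow$(1) comes straight from Carlson's inequality \eqref{eq:carlson}: $\kappa=\pcc(X)\leqslant 2^{\pc(X)}=2^\mu$. The minimality claims are also cheap. A point with $\pcd(p,X)=\lambda$ has a decreasing local $\pi$-base of cardinality $\lambda$ with no smaller one, so every neighborhood of $p$ contains one of $\lambda$ pairwise distinct open sets, which gives $|X|\geqslant\lambda$. If $\lambda$ is singular, a decreasing family has a strictly decreasing cofinal subfamily indexed by a regular cardinal. Hence $\pcd(p,X)$ is always a regular cardinal or $\omega$. The supremum $\lambda$ then needs at least $\cf(\lambda)$ points with values cofinal in $\lambda$, and these points are non-isolated. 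This gives the lower bound on $\niso(X)$.

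For (1)$\Rightarrow$(2) I would build a single point $p$ carrying a product-type filter, generalizing Kania's combination. The plan is to glue three ingredients at one non-isolated point, each pinning one invariant without disturbing the others.

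\begin{enumerate}
\item A \emph{star filter space} over a set of size $\mu$ equipped with a filter whose ultrafilter character is tuned, so that it has $\pc=\mu$ and an $r$-linked (hence linked) base of size $\mu$. Here I would invoke the exact computation of $\pcc$ of star filter spaces in terms of ultrafilter character, which rests on the finite selection lemma for pushforwards. Since $\kappa\leqslant2^\mu$, an ultrafilter on $\mu$ (or a suitable filter) with character exactly $\kappa$ exists, and it forces $\pcc=\kappa$.
\item A component raising $\pcl$ to $\nu$ without raising $\pcc$ above $\kappa$. I would use a disjoint sum or one-point union with a space having $\pc=\pcl=\pcc=\pcd=\nu$, for instance a point of character $\nu$ with a decreasing base.
\item A matrix-type filter whose decreasing local $\pi$-character is exactly $\lambda$ for regular $\lambda$, while it has centered bases of size $\leqslant\kappa$. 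For example, take the filter generated by tails of $\lambda$ combined with $\kappa$ coordinates, so that a decreasing $\pi$-base must have cofinality $\lambda$.
\end{enumerate}

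Taking the one-point union at $p$, all other points are isolated. The invariants at $p$ should be the maxima componentwise, except that $\pcc$ and $\pcd$ have to be checked for the combined filter, since centered families can mix components. Zero-dimensionality and the Tychonoff property follow because the space is $\{p\}\cup D$ with $D$ discrete and a filter topology. The cardinality is $\lambda$.

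If $\lambda$ is singular, I would take a sum of $\cf(\lambda)$ such one-point spaces with decreasing values $\lambda_i\nearrow\lambda$ regular and above $\kappa$; suprema then give $\pcd(X)=\lambda$. For the crowded version I would replace the isolated points by a product with a crowded zero-dimensional space of small weight, such as $2^\omega$ or a Cantor cube $2^\mu$. Alternatively, I would take a dense homogeneous-like construction, such as a suitable subspace of the product of the witness with the rationals, and check that the invariants are preserved because the extra factor has $\pi$-character $\omega$.

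The main obstacle is the exact value $\pcc=\kappa$ at the glued point. The upper bound only requires exhibiting a centered base built from a base of the ultrafilter extension. The lower bound requires showing that any centered local $\pi$-base of size $<\kappa$ yields, via the finite selection lemma, an ultrafilter of character $<\kappa$ extending the filter, which is a contradiction. This has to keep working after adding the matrix and $\nu$-components, so I would project centered families onto the star component and verify that the pushforward stays centered.
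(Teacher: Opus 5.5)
There is a genuine gap at the gluing step, and it undermines the whole construction. In the one-point union $X(\F_0\oplus\F_1)$, a basic neighborhood of $p$ has the form $\{p\}\cup A_0\cup A_1$ with $A_i\in\F_i$. So a family coinitial in a \emph{single} summand is already a local $\pi$-base. Hence $\pc$, $\pcc$ and $\pcd$ at $p$ are the \emph{minima} of the summand values, not the maxima, and a decreasing base exists as soon as one summand has one (Lemma~\ref{lem:sum}). Your own rule is not even consistent: the star summand has no decreasing base, so under maxima $\pcd$ would be undefined at $p$.

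With the correct rule, your component (2) is destructive. A summand with all four values equal to $\nu$ pulls $\pcc$ and $\pcd$ at $p$ down to $\nu$. Putting it at a separate point instead forces $\pc(X)\geqslant\nu>\mu$. Either way, when $\mu<\nu$ you have no point whose linked value exceeds its ordinary value.

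The missing idea is an ultrafilter summand. Take $\U$ uniform on $\mu$ with $\chi(\U)=2^\mu$ and pass to $X(\U\oplus\F)$. A linked family coinitial in an ultrafilter consists of members of it and is therefore a base. This is the heart of Lemma~\ref{lem:ultra-linked}, which gives $\pcl\geqslant\min\{2^\mu,\nu\}=\nu$. Meanwhile the singletons of the $\U$-summand bring $\pc$ down to $\mu$.

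Your step (1) also rests on a false claim: $\kappa\leqslant2^\mu$ does not yield an ultrafilter on $\mu$ of character exactly $\kappa$. For $\mu=\omega$, every free ultrafilter has character at least $\mathfrak u$, and $\omega_1<\mathfrak u$ is consistent (e.g.\ under Martin's axiom with $\cont=\omega_2$). So $\kappa=\omega_1$ would be unreachable by your method.

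The paper never needs an exact value. The star summand lives on $\nu$ and uses a Posp\'i\v{s}il ultrafilter $\mathcal V$ with $\chi(\mathcal V)=2^\nu$. By Theorem~\ref{main:star} it has $\pc=\pcl=\nu$ but $\pcc=2^\nu\geqslant\kappa$. The matrix summand $\G_{\kappa,\lambda}$ of Proposition~\ref{prop:matrix} then fixes $\pcc=\min\{2^\nu,\kappa\}=\kappa$ and $\pcd=\lambda$. So for $\mu=\nu$ your star-plus-matrix ingredients are the right ones, once combined by minima and with the star built from a maximal-character ultrafilter.

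For the same reason, your intended lower bound (projecting centered families onto the star component) is unavailable. A centered base may live entirely in the matrix summand. The bound $\pcc\geqslant\kappa$ holds because \emph{both} summands have centered value at least $\kappa$.

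Two smaller repairs are needed as well.
\begin{enumerate}
\item $\lambda$ distinct members of a chain do not give $\lambda$ points: Dedekind cuts of $\mathbb Q$ form a chain of size $\cont$ in a countable set. So $|X|\geqslant\lambda$ needs the well-ordered refinement of Proposition~\ref{prop:chains}, i.e.\ $\pcd(p,X)\leqslant\max\{\omega,|X|\}$ pointwise, followed by taking the supremum.
\item For the crowded version, fibers $2^\omega$ raise the cardinality to $\lambda\cdot\cont$, which exceeds $\lambda$ when $\lambda<\cont$. You need countable clopen fibers $\mathbb Q_s$, with neighborhoods of $p$ containing whole fibers, as in Proposition~\ref{prop:crowded}.
\end{enumerate}
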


The absence of a separation hypothesis in (2) makes the necessity assertion explicit; the construction gives the stronger separation properties stated afterward. No cardinal in this theorem is assumed regular. The case $\mu=\nu=\omega$ completely resolves Kania's question. The case $\mu<\nu$ gives four distinct values.

For a proper filter $\F$, let $\chi(\F)$ be the least infinite cardinality of a base under actual inclusion. An ultrafilter on an infinite cardinal $\mu$ is \emph{uniform} if all its members have cardinality $\mu$. Given a uniform ultrafilter $\U$ on $\mu$ and $2\leqslant r<\omega$, put
\[
E_r(\mu)=[\mu]^r,\quad L_\xi^{(r)}=\{e\in E_r(\mu):\xi\in e\},\quad
H_A^{(r)}=\{e\in E_r(\mu):e\cap A\ne\varnothing\},
\]
and define
\begin{equation}\label{eq:starfilter}
\F_\U^{(r)}=\{B\subseteq E_r(\mu):H_A^{(r)}\subseteq B
\text{ for some }A\in\U\}.
\end{equation}
For $\mu=\omega$ and $r=2$, this is Kania's star filter. Write $X(\F)$ for the associated one-point filter space.

Our next result states  the following
\begin{maintheorem}\label{main:star}
Let $\U$ be a uniform ultrafilter on an infinite cardinal $\mu$, and let $2\leqslant r<\omega$. Every proper filter $\G\supseteq\F_\U^{(r)}$ satisfies
\[
\chi(\G)\geqslant\chi(\U),\qquad
\chi(\F_\U^{(r)})=\chi(\U).
\]
At the non-isolated point $p$ of the zero-dimensional Tychonoff space $Y=X(\F_\U^{(r)})$,
\[
\pc(p,Y)=\pcl(p,Y)=\mu,\qquad \pcc(p,Y)=\chi(\U).
\]
The stars form an $r$-linked local $\pi$-base of cardinality $\mu$. The space has cardinality $\mu$, and $p$ admits no decreasing local $\pi$-base.
\end{maintheorem}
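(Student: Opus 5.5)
The plan is to reduce everything to a statement about filters on $E_r(\mu)$ and then transfer it to $\U$ along the $r$ coordinate maps. For $e\in E_r(\mu)$ list $e=\{f_1(e)<\dots<f_r(e)\}$, giving maps $f_i\colon E_r(\mu)\to\mu$. For every $A\subseteq\mu$ we have $H_A^{(r)}=\bigcup_{i\leqslant r}f_i^{-1}(A)$.

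\textbf{Step 1 (finite selection lemma).} Let $\G\supseteq\F_\U^{(r)}$ be a proper filter. I claim there are finitely many sets $D_1,\dots,D_m$ and an index $k$ such that $\G'=\G\vee\{D_1,\dots,D_m\}$ is proper and the pushforward $f_k(\G')$ (generated by the images $f_k[C]$, $C\in\G'$) equals $\U$.

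The construction proceeds in stages. Put $\G_0=\G$. At stage $k$, suppose $f_k(\G_{k-1})\not\supseteq\U$. Then pick $A_k\in\U$ with $A_k\notin f_k(\G_{k-1})$. This means every $C\in\G_{k-1}$ meets $f_k^{-1}(\mu\setminus A_k)$, so $\G_k=\G_{k-1}\vee\{f_k^{-1}(\mu\setminus A_k)\}$ is proper.

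If this succeeds for all $k\leqslant r$, put $A=\bigcap_kA_k\in\U$. Then $\G_r$ contains $H_A^{(r)}\subseteq\bigcup_kf_k^{-1}(A_k)$ together with all the sets $f_k^{-1}(\mu\setminus A_k)$. Their intersection is empty, which is a contradiction. Hence at some first $k$ we have $f_k(\G_{k-1})\supseteq\U$. Since $f_k(\G_{k-1})$ is proper and $\U$ is an ultrafilter, equality holds.

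Because $\G_{k-1}$ is generated by $\G$ plus finitely many sets, $\chi(\G_{k-1})\leqslant\chi(\G)$. Images of a base form a base of the pushforward, so $\chi(\U)\leqslant\chi(\G)$. Conversely, $\{H_A^{(r)}:A\in\mathcal B\}$ is a base of $\F_\U^{(r)}$ for any base $\mathcal B$ of $\U$, because $H_A\cap H_{A'}\supseteq H_{A\cap A'}$. Therefore $\chi(\F_\U^{(r)})=\chi(\U)$. I expect this step to be the main obstacle: a naive "some $f_i(\G)=\U$" fails, and the finitely many auxiliary sets are exactly what make the argument work.

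\textbf{Step 2 (translation to $Y$).} In $Y=E_r(\mu)\cup\{p\}$ every nonempty open set $W$ meets $E_r(\mu)$, and $W\cap E_r(\mu)$ is open. So we may assume the members of a local $\pi$-base at $p$ are subsets of $E_r(\mu)$. This replacement preserves being centered, linked or decreasing, since $p\in W$ forces $W\cap E_r(\mu)\in\F_\U^{(r)}$.

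A centered $\pi$-base $\mathcal W$ generates a proper filter $\G$ with $\chi(\G)\leqslant|\mathcal W|$. Every $B\in\F_\U^{(r)}$ contains some member of $\mathcal W$, so $\G\supseteq\F_\U^{(r)}$. Step 1 then gives $\pcc(p,Y)\geqslant\chi(\U)$, and a base of $\F_\U^{(r)}$ gives the reverse inequality.

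The stars $L_\xi^{(r)}$ are a $\pi$-base, since $H_A\supseteq L_\xi$ for $\xi\in A$. They are $r$-linked, since at most $r$ points lie in a common $r$-set. For $\pc(p,Y)\geqslant\mu$, take fewer than $\mu$ nonempty sets $W$ and choose $e_W\in W$. The set $S=\bigcup_We_W$ has size $<\mu$, so $A=\mu\setminus S\in\U$ by uniformity. Then $H_A$ contains no $W$. The cardinality $|E_r(\mu)|=\mu$ is immediate. Zero-dimensionality and the Tychonoff property are standard for one-point filter spaces: each $\{p\}\cup B$ is clopen.

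\textbf{Step 3 (no decreasing base).} Suppose $\mathcal W$ were a decreasing $\pi$-base. Thin it to a cofinal strictly decreasing well-ordered chain $(W_\alpha)_{\alpha<\theta}$. The chain cannot stabilize, since then $\G$ would be principal, and the proof of Step 1 would make $\U$ principal.

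Apply Step 1 to the filter $\G$ generated by the chain. Intersecting the chain with the finitely many $D_j$ and pushing forward by $f_k$ produces a decreasing well-ordered base $(B_\alpha)$ of $\U$, which after thinning is strictly decreasing.

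Now pick $x_\alpha\in B_\alpha\setminus B_{\alpha+1}$. Let $Z$ be the set of $x_\alpha$ with $\alpha$ even; we have $Z\in\U$ or $\mu\setminus Z\in\U$. Either set must contain some $B_\beta$. But $B_\beta$ contains both $x_\beta$ and $x_{\beta+1}$, which have opposite parity, and this holds for both alternatives. This contradiction shows that $p$ admits no decreasing local $\pi$-base.
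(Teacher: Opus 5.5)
Your Steps 1 and 2 follow the paper. Step 1 is Lemma~\ref{lem:finite}: you adjoin the sets $f_k^{-1}(\mu\setminus A_k)$ greedily, coordinate by coordinate, instead of choosing $J\subseteq r$ of maximal size, but the mechanism of restricting to an $\F$-positive set and pushing forward along one coordinate is the same. Step 2 is Lemma~\ref{lem:traces} together with Proposition~\ref{prop:extensions}.

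Step 3 is a genuinely different route, and it is correct. The paper excludes a decreasing local $\pi$-base by counting. Proposition~\ref{prop:uniform} gives $\pcc(p,Y)=\chi(\U)>\mu=|Y|$, whereas Proposition~\ref{prop:chains} gives $\pcd(p,Y)\leqslant|Y|$ whenever a decreasing base exists. You instead push a chain base of the generated extension filter through the selection lemma to obtain a base of $\U$ linearly ordered by inclusion, and refute that with the even/odd argument.

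Your route is self-contained and proves slightly more. It shows that no proper filter extending $\F_\U^{(r)}$ has a chain as a base, for any free ultrafilter $\U$. It needs neither $\chi(\U)>\mu$ nor the cardinality bound in the chain-refinement theorem. The paper's route is shorter only because those two facts are needed elsewhere anyway, for instance in Lemma~\ref{lem:ultra-space} and Proposition~\ref{prop:simultaneous}.

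When you write Step 3 out, state two facts you currently use silently:
\begin{enumerate}
\item The thinned chain $(B_\alpha)_{\alpha<\theta'}$ has limit length $\theta'$, because $\U$ is non-principal. Hence $B_{\alpha+1}$ exists for every $\alpha$. Non-stabilization of the $W$-chain is not what is needed here.
\item The points $x_\alpha$ are pairwise distinct: for $\alpha<\gamma$ one has $x_\gamma\in B_{\alpha+1}$ while $x_\alpha\notin B_{\alpha+1}$. This is what makes membership in $Z$ depend only on the parity of the index.
\end{enumerate}
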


The proof uses a finite selection lemma for filter pushforwards. Posp\'i\v{s}il's theorem supplies uniform ultrafilters on $\mu$ of character $2^\mu$, see  \cite{Pospisil1939,Comfort}; we recall a proof adapted to the present notation. This strengthens Kania's star estimate and, with his matrix and projection arguments, yields Theorem~\ref{main:spectrum}. The additional linked separation uses an ultrafilter component and a separate lower-bound argument; it does not require a linked analog of the centered projection lemma.

For a fixed $r$, the witness in Theorem~\ref{main:spectrum} may depend on $r$. At the first uncountable centered value one can remove this dependence.

Finally we prove also the following
\begin{maintheorem}\label{main:simultaneous}
For every cardinal $\lambda>\omega_1$, there is a zero-dimensional Tychonoff space $X$ with
\[
\pc(X)=\pcl(X)=\omega<\pcc(X)=\omega_1<\pcd(X)=\lambda
\]
such that, for every integer $r\geqslant2$, every point of this same space has a countable $r$-linked local $\pi$-base. One may have $|X|=\lambda$ and the optimal number of non-isolated points from Theorem~\ref{main:spectrum}, or instead choose $X$ crowded with the same cardinality.
\end{maintheorem}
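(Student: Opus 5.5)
The plan is to make Kania's construction work for all $r$ at once. I will replace his star filter on $E_2(\omega)$ by a \emph{multi-level} star filter on the disjoint union $E=\bigsqcup_{2\leqslant r<\omega}E_r(\omega)$, in which every level is controlled by the same set $A$. For a filter $\mathcal D$ on $\omega$, let $\F_{\mathcal D}^{(<\omega)}$ consist of all $B\subseteq E$ that contain $\bigsqcup_r H_A^{(r)}$ for some $A\in\mathcal D$. Two points justify this choice.
\begin{itemize}
\item Using all of $[\omega]^{<\omega}$ in a single level would be a mistake. Finitely many stars $L_{\xi_1},\dots,L_{\xi_n}$ there all contain $\{\xi_1,\dots,\xi_n\}$, so the stars would be centered and the centered character would collapse to $\omega$.
\item With separate levels, the stars $L_\xi^{(r)}$, $\xi<\omega$, inside the single level $E_r$ form a local $\pi$-base at the non-isolated point. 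Indeed, every basic neighborhood contains $H_A^{(r)}\supseteq L_\xi^{(r)}$ for any $\xi\in A$. These stars are $r$-linked, because $r$ of them share an $r$-set. So for each $r$ the same point has a countable $r$-linked local $\pi$-base, namely the level-$r$ stars. The base depends on $r$, but the space does not.
\end{itemize}

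Next I glue this component to Kania's matrix filter exactly as in the proof of Theorem~\ref{main:spectrum}, with centered parameter $\omega_1$ and decreasing parameter $\lambda$. This will be a one-point space if $\lambda$ is regular, a $\cf(\lambda)$-indexed sum if $\lambda$ is singular, and the crowded modification otherwise. Then I verify the four invariants one at a time.
\begin{enumerate}
\item The values $\pc=\pcl=\omega$ follow from the countable $r$-linked star bases just described, together with the countable linked bases Kania's matrix part already provides.
\item The value $\pcd=\lambda$, and the fact that decreasing local $\pi$-bases exist everywhere, come verbatim from Kania's matrix and projection arguments. The star component does not change these, since those arguments only use that the star part is a product factor.
\item For the lower bound $\pcc\geqslant\omega_1$, I project a centered local $\pi$-base onto level $E_2$. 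By the projection lemma used for Theorem~\ref{main:spectrum}, a centered local $\pi$-base of size $\kappa$ yields a proper filter $\G\supseteq\F_\U^{(2)}$ with $\chi(\G)\leqslant\kappa$, for some nonprincipal ultrafilter $\U\supseteq\mathcal D$. Theorem~\ref{main:star} then gives $\kappa\geqslant\chi(\U)\geqslant\omega_1$. This works because no nonprincipal ultrafilter on $\omega$ is countably generated.
\end{enumerate}

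The main obstacle is the upper bound $\pcc\leqslant\omega_1$. We cannot simply take $\mathcal D$ to be an ultrafilter of character $\omega_1$, since $\mathfrak u=\omega_1$ is not a ZFC theorem. This is exactly why the statement is pinned at $\omega_1$. I would instead follow Kania's hypothesis $\kappa\leqslant\mathfrak p$, which holds in ZFC when $\kappa=\omega_1$.
\begin{itemize}
\item Take $\mathcal D$ to be the Fréchet filter, or the filter used by Kania.
\item Build a centered family of size $\omega_1$ by a recursion of length $\omega_1$. At countable stages, pseudointersections of countably many sets exist because $\omega_1\leqslant\mathfrak p$.
\item This produces sets $A_\alpha$ and sets $H_{A_\alpha}$ intersected with matrix pieces, forming a $\subseteq^*$-decreasing, hence centered, local $\pi$-base of size $\omega_1$.
\end{itemize}
The new point to check is that one choice of $A_\alpha$ serves every level $E_r$ simultaneously. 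I expect this to hold because $H_A^{(r)}$ is monotone in $A$ uniformly in $r$. The remaining risk is whether Kania's recursion really uses only this monotonicity, and I would verify that first.
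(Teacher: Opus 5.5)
There is a fatal gap. Because a single control set $A$ governs every level, your filter $\F^{(<\omega)}_{\mathcal D}$ has a countable \emph{decreasing} local $\pi$-base, whatever $\mathcal D$ is. Let $e_r=\{0,1,\dots,r-1\}\in E_r$ and $T_n=\{e_r:r\geqslant n\}$. If $A\in\mathcal D$ and $m\in A$, then $m\in e_r$, hence $e_r\in H_A^{(r)}$, for every $r>m$. So $T_{m+1}\subseteq\bigsqcup_rH_A^{(r)}$.

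Thus $\{T_n:n\geqslant2\}$ is a countable decreasing family coinitial in your filter, and it stays coinitial after the matrix component is added. Your space therefore has $\pcc=\pcd=\omega$ at its distinguished point, so your items 2 and 3 both fail. The faulty step in item 3 is the projection onto $E_2$. Lemma~\ref{lem:sum} concerns finite sums, and your filter is not even a sum of its level filters. A centered base can spread over infinitely many levels, exactly as $\{T_n\}$ does while meeting $E_2$ in one point. The passage to ``some nonprincipal ultrafilter $\U\supseteq\mathcal D$'' is also unjustified. With $\mathcal D$ the Fr\'echet filter, $\F^{(2)}_{\mathcal D}$ is just the cofinite filter on $E_2$, which has countable character. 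The uniformity in $r$ that you hoped would help is precisely what destroys the construction.

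You have also put the difficulty in the wrong place. The upper bound $\pcc\leqslant\omega_1$ costs nothing: the family $\{B_F:F\in[\omega_1]^{<\omega}\}$ of Proposition~\ref{prop:matrix} is already a centered local $\pi$-base for the sum, so no $\mathfrak p$-recursion is needed. What must be proved is that the star component admits no countable centered local $\pi$-base. By Proposition~\ref{prop:chains} this also excludes a decreasing base there, so Lemma~\ref{lem:sum} gives $\pcc=\omega_1$ and $\pcd=\lambda$ for the sum. The value is pinned at $\omega_1$ because $\pcc>\omega$ is all that is obtained for the simultaneous component, not because of $\mathfrak u$.

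The paper controls the levels independently. For a fixed free ultrafilter $\U$ it takes $\Sigma=\{A\subseteq S:A\cap E_r\in\F_\U^{(r)}\text{ for all }r\}$. Suppose $\G\supseteq\Sigma$ is proper and countably based. If $\G$ makes some level $E_r$ positive, restricting to $E_r$ contradicts Theorem~\ref{main:star}. Otherwise every member of a base $\{B_k:k<\omega\}$ of $\G$ meets infinitely many levels. Choose $e_k\in B_k\cap E_{r_k}$ with distinct $r_k$, and let $A$ have trace $H^{(r_k)}_{\omega\setminus e_k}$ on $E_{r_k}$ and full traces elsewhere. Then $A\in\Sigma$, but $e_k\in B_k\setminus A$ for every $k$. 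This diagonal step needs a different control set on each level, which a common $A$ forbids.

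A minor point: item 1 credits the matrix part with countable linked bases, but its $\pc$ is $\omega_1$. The level-$r$ stars alone suffice there.
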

\section{Filter spaces and decreasing bases}

A proper filter $\F$ on a set $S$ is \emph{free} if $\bigcap\F=\varnothing$. A family $\B$ of non-empty subsets of $S$ is \emph{coinitial in $\F$} if every member of $\F$ contains some member of $\B$; we do not require $\B\subseteq\F$. The space $X(\F)=S\cup\{p\}$ has all points of $S$ isolated and basic neighborhoods $\{p\}\cup A$, $A\in\F$, at $p$.

Freeness implies that $S\setminus a\in\F$ for every finite $a\subseteq S$. Consequently, $S$ and all filter members are infinite, and every non-empty open subset of $X(\F)$ meets $S$. The basic neighborhoods of $p$ and the singleton subsets of $S$ are clopen. Thus $X(\F)$ is zero-dimensional and Tychonoff, with $p$ its unique non-isolated point.

We shall use the following trace observation from \cite[Lemmas 2.3 and 2.4]{Kania}.

\begin{lemma}\label{lem:traces}
Let $\F$ be a free filter on $S$. A family $\B$ of non-empty open subsets of $X(\F)$ is a local $\pi$-base at $p$ if and only if its traces $\{B\cap S:B\in\B\}$ are coinitial in $\F$. Taking traces preserves linkedness, centeredness, every finite degree of linkedness, and inclusion chains. If $\B$ is a centered local $\pi$-base, every finite intersection of its traces is infinite.
\end{lemma}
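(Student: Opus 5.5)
We prove the two directions of the equivalence separately and then the three claims about traces. Throughout we use that every open subset of $X(\F)$ is a union of isolated singletons together with, if it contains $p$, a set $\{p\}\cup A$ with $A\in\F$. We also use that every non-empty open set meets $S$, which holds because $S\setminus\{s\}\in\F$ for each $s$ by freeness.

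For the equivalence, suppose first that $\B$ is a local $\pi$-base at $p$ and $F\in\F$. The set $\{p\}\cup F$ is a neighborhood of $p$, so it contains some $B\in\B$. Then $B\cap S\subseteq F$, and $B\cap S$ is non-empty since $B$ is non-empty open. Hence the traces are non-empty and coinitial in $\F$. Conversely, suppose the traces are coinitial and $U$ is a neighborhood of $p$. Then $U\supseteq\{p\}\cup F$ for some $F\in\F$. By freeness, $F\setminus\{s\}\in\F$ for any fixed $s$; in fact it is cleaner to use $F$ directly. Pick $B\in\B$ with $B\cap S\subseteq F$. The remaining issue is whether $B$ contains $p$. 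If $p\notin B$, then $B=B\cap S\subseteq F\subseteq U$. If $p\in B$, then $B\subseteq\{p\}\cup(B\cap S)\subseteq\{p\}\cup F\subseteq U$. In either case $B\subseteq U$, so $\B$ is a local $\pi$-base.

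For preservation, note that $\bigcap_{i<n}(B_i\cap S)=\bigl(\bigcap_{i<n}B_i\bigr)\cap S$. A finite intersection of open sets is open, and every non-empty open set meets $S$. So a subfamily of $\B$ has non-empty intersection exactly when the corresponding traces do. This gives the preservation of linkedness, centeredness and $r$-linkedness. Inclusion is preserved by intersecting with $S$, so chains map to chains.

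For the infinitude claim, let $B_0,\dots,B_{n-1}\in\B$ with $\B$ centered, and put $W=\bigcap_{i<n}B_i$, a non-empty open set. Suppose toward a contradiction that $a=W\cap S$ is finite. By freeness, $S\setminus a\in\F$. By coinitiality, some $B\in\B$ has $B\cap S\subseteq S\setminus a$. Centeredness gives $B\cap W\neq\varnothing$, and this open set meets $S$ inside $W\cap S=a$. That contradicts $B\cap S\cap a=\varnothing$. Hence every finite intersection of traces is infinite.

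The only real subtlety is the case $p\in B$ in the converse direction, which is handled by the inclusion $B\subseteq\{p\}\cup(B\cap S)$; the rest is routine.
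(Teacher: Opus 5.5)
Your proof is correct and follows the paper's argument. The equivalence and the preservation claims rest on the basic neighborhoods $\{p\}\cup A$ and on the fact that every non-empty open set meets $S$. The infinitude claim uses $S\setminus a\in\F$ to find a member of $\B$ whose trace avoids the finite set $a$, which contradicts centeredness. The one difference is in that last step: you apply ``non-empty open sets meet $S$'' to $B\cap W$, which lets you skip the paper's case distinction on whether every member of the finite subfamily contains $p$.
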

\begin{proof}
The first assertion follows directly from the basic neighborhoods $\{p\}\cup A$. Every non-empty finite intersection of open sets is open and therefore meets $S$, which proves preservation of the intersection properties; inclusion is also preserved. For the last assertion, consider a non-empty finite subfamily $\B_0\subseteq\B$. If each member contains $p$, its trace belongs to $\F$, so their intersection is infinite. Otherwise some member omits $p$. If the intersection of the traces were a finite set $a$, locality would provide $B_1\in\B$ contained in $\{p\}\cup(S\setminus a)$. Then the intersection of $\B_0\cup\{B_1\}$ would be empty, contradicting centeredness. The empty intersection is $S$, which is infinite.
\end{proof}
Next two propositions will be used in the proof of Theorem \ref{main:star}.
\begin{proposition}\label{prop:extensions}
Let $\F$ be a free filter on $S$. Then
\begin{equation}\label{eq:extensions}
\pcc(p,X(\F))=
\min\{\chi(\G):\G\text{ is a proper filter on }S,\ \F\subseteq\G\}.
\end{equation}
\end{proposition}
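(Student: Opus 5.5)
We prove the two inequalities in \eqref{eq:extensions} separately. Throughout we use Lemma~\ref{lem:traces} to pass between local $\pi$-bases at $p$ and families of subsets of $S$ that are coinitial in $\F$.

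\textbf{($\leqslant$)} Let $\G\supseteq\F$ be a proper filter on $S$, and let $\B$ be a base of $\G$ with $|\B|=\chi(\G)$. For each $B\in\B$ we want a non-empty open set $B\setminus\{p\}$, which requires $B\neq\varnothing$; this holds because $\G$ is proper. The sets $B\subseteq S$ are open in $X(\F)$, since points of $S$ are isolated.
\begin{itemize}
\item The family $\B$ is centered, because finite intersections of members of $\B$ lie in the proper filter $\G$.
\item The family $\B$ is coinitial in $\F$: every $A\in\F$ belongs to $\G$ and therefore contains some $B\in\B$.
\end{itemize}
By Lemma~\ref{lem:traces}, $\B$ is a centered local $\pi$-base at $p$. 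Hence $\pcc(p,X(\F))\leqslant\chi(\G)$, where the convention that both invariants are the least \emph{infinite} cardinal makes the finite case harmless.

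\textbf{($\geqslant$)} Let $\B$ be a centered local $\pi$-base at $p$ of cardinality $\pcc(p,X(\F))$, and let $\mathcal T$ be the family of its traces. By Lemma~\ref{lem:traces}, $\mathcal T$ is centered and coinitial in $\F$. Let $\mathcal T^\wedge$ be the set of finite intersections of members of $\mathcal T$; these are non-empty, indeed infinite. Then $\mathcal T^\wedge$ is a filter base, and we let $\G$ be the filter it generates. Since $|\mathcal T^\wedge|\leqslant|\mathcal T|+\omega$, we get $\chi(\G)\leqslant|\B|$, with the infinite convention absorbing the $\omega$ term.

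The main obstacle is showing $\F\subseteq\G$. Coinitiality only gives, for each $A\in\F$, some trace $T\subseteq A$; upward closure of $\G$ then yields $A\in\G$. So $\F\subseteq\G$ does follow, provided each trace genuinely lies in $\G$. This holds by construction: $\G$ is generated by $\mathcal T$, so each $T\in\mathcal T$ belongs to $\G$, and then $A\supseteq T$ gives $A\in\G$. Properness of $\G$ comes from the members of $\mathcal T^\wedge$ being non-empty.

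The only remaining subtlety is the cardinal bookkeeping when $\B$ is finite, which is handled by the infinite convention. Combining the two inequalities gives equality in \eqref{eq:extensions}.
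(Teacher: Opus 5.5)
Your proof is correct and follows the paper's argument in both directions. A base of any proper $\G\supseteq\F$ is already a centered local $\pi$-base at $p$, and conversely the traces of a centered local $\pi$-base, closed under finite intersections, form a base of a proper filter that contains $\F$ by coinitiality and upward closure (so the step you flag as the ``main obstacle'' is in fact immediate, as in the paper).
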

\begin{proof}
The traces of a centered local $\pi$-base generate a proper filter $\G$ by Lemma~\ref{lem:traces}. Coinitiality gives $\F\subseteq\G$. Closing a family of cardinality at most an infinite cardinal $\tau$ under finite intersections gives a base of cardinality at most $\tau$, so this proves one inequality. Conversely, a base for any proper filter $\G\supseteq\F$ is a centered family of non-empty open subsets of the discrete part $S$ and is coinitial in $\F$. It is therefore a centered local $\pi$-base at $p$.
\end{proof}

For a map $f:S\to T$, the pushforward of a proper filter is
\[
f_*(\F)=\{A\subseteq T:f^{-1}(A)\in\F\}.
\]
If $\B$ is a base for $\F$, then $\{f[B]:B\in\B\}$ is a base for $f_*(\F)$. In particular,
\begin{equation}\label{eq:pushcharacter}
\chi(f_*(\F))\leqslant\chi(\F).
\end{equation}
A set $a\subseteq S$ is \emph{$\F$-positive} if it meets every member of $\F$. In this case the filter $\F_a$ generated by $\F\cup\{a\}$ is proper, with base $\{B\cap a:B\in\B\}$, and
\begin{equation}\label{eq:restrictcharacter}
\chi(\F_a)\leqslant\chi(\F).
\end{equation}

The next fact, including the equivalence of the conventions for decreasing bases, is Kania's chain-refinement theorem \cite[Theorem B and Corollary 2.2]{Kania}.

\begin{proposition}\label{prop:chains}
Every inclusion-chain local $\pi$-base at $p\in X$ has a coinitial subfamily which can be enumerated as a strictly decreasing transfinite sequence, of cardinality at most $\max\{\omega,|X|\}$ and no greater than the original family. Consequently, when $\pcd(p,X)$ is defined, it is either $\omega$ or a regular uncountable cardinal, and
\[
\pcd(p,X)\leqslant\max\{\omega,|X|\}.
\]
In particular, a countable space with $\pcc(p,X)>\omega$ admits no decreasing local $\pi$-base at $p$.
\end{proposition}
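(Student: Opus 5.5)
The plan is to prove the refinement statement directly from the linear order, then read off the three consequences.

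Let $\B$ be an inclusion-chain local $\pi$-base at $p$, ordered by reverse inclusion. First I will dispose of the case where $\B$ has a least element $B_0$, i.e.\ $B_0\subseteq B$ for all $B\in\B$. Every neighborhood $U$ of $p$ contains some $B\in\B$, and $B_0\subseteq B\subseteq U$. Hence $\{B_0\}$ is already a local $\pi$-base. It is a coinitial subfamily of size $1$, trivially a strictly decreasing sequence of length $1$, and it gives the value $\omega$.

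Otherwise $\B$ has no least element. I will let $\delta$ be the cofinality of the linear order $(\B,\supseteq)$. This is a regular infinite cardinal with $\delta\leqslant|\B|$. By the standard cofinal-sequence construction, I will choose a strictly decreasing sequence $(B_\alpha)_{\alpha<\delta}$ that is coinitial in $\B$: every member of $\B$ contains some $B_\alpha$. The sequence is still a local $\pi$-base, since every neighborhood of $p$ contains some $B\in\B$ and hence some $B_\alpha$.

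For the bound by $|X|$, I will use strictness. The sets $B_\alpha\setminus B_{\alpha+1}$, $\alpha<\delta$, are non-empty and pairwise disjoint, because $\beta>\alpha$ forces $B_\beta\subseteq B_{\alpha+1}$. Picking a point from each gives an injection $\delta\to X$, so $\delta\leqslant|X|$. This proves the first sentence of the statement.

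For the consequences, assume $\pcd(p,X)=\lambda$ is defined and uncountable. Fix a decreasing local $\pi$-base $\B$ with $|\B|=\lambda$ and apply the refinement. The first case (a least element) would give a finite base, contradicting $\lambda>\omega$. So we obtain a decreasing local $\pi$-base of regular cardinality $\delta\leqslant\lambda$. Minimality of $\lambda$ gives $\delta=\lambda$, so $\lambda$ is regular, and $\lambda=\delta\leqslant|X|$. If instead $\lambda=\omega$, there is nothing to prove. In both cases $\pcd(p,X)\leqslant\max\{\omega,|X|\}$.

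Finally, suppose $X$ is countable and $p$ admitted a decreasing local $\pi$-base. A chain is centered, so $\pcc(p,X)\leqslant\pcd(p,X)\leqslant\omega$, contradicting $\pcc(p,X)>\omega$.

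I do not expect a genuine obstacle here. The only points needing care are separating off the least-element case, since otherwise the cofinality could be $1$, and noting that coinitiality within $\B$ transfers to coinitiality with respect to neighborhoods of $p$.
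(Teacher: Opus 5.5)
Your proof is correct and follows essentially the same route as the paper. You refine the chain to a strictly decreasing well-ordered coinitial sequence, bound its length by injecting points chosen from the successive differences $B_\alpha\setminus B_{\alpha+1}$, and obtain regularity from minimality. The only difference is cosmetic: you index by $\cf(\B,\supseteq)$ from the outset, so regularity is automatic, whereas the paper runs a greedy recursion of some length $\theta$ and afterwards passes to a cofinal subsequence of length $\cf(\theta)$.
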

\begin{proof}
From a non-empty chain choose successively a member strictly contained in every previously chosen member, stopping as soon as the selected family is coinitial. The chain property makes the next choice possible whenever coinitiality fails. If $X$ is infinite, this recursion must stop before $|X|^+$: otherwise a point chosen from each successive difference would give $|X|^+$ distinct points of $X$. The finite case is immediate.

For regularity, suppose $\mu=\pcd(p,X)>\omega$. Refine a witnessing chain to a strictly decreasing sequence indexed by an ordinal $\theta$ with $|\theta|\leqslant\mu$. Minimality gives $|\theta|=\mu$. The ordinal $\theta$ is not a successor, since a last member would alone be a local $\pi$-base. Passing to a cofinal subsequence gives $\mu\leqslant\cf(\theta)\leqslant|\theta|=\mu$, so $\mu$ is regular. Finally, a chain of non-empty sets is centered.
\end{proof}

The regularity assertion is pointwise. A global decreasing local $\pi$-character, being a supremum, can be singular.

\section{Ultrafilters and finite families of maps}
First we prove the following lemma which will be used for the proof of Theorem \ref{main:star}.
\begin{lemma}\label{lem:finite}
Let $\F$ be a proper filter on $S$, let $\U$ be an ultrafilter on $T$, and let $f_i:S\to T$ for $i<r$, where $1\leqslant r<\omega$. Suppose
\begin{equation}\label{eq:finiteunion}
\bigcup_{i<r}f_i^{-1}(A)\in\F\qquad(A\in\U).
\end{equation}
Then there are an $\F$-positive set $a\subseteq S$ and an index $i<r$ such that
\[
(f_i)_*(\F_a)=\U.
\]
In particular, $\chi(\U)\leqslant\chi(\F)$.
\end{lemma}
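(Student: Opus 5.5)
We argue by induction on $r$, with a maximality-style argument for the inductive step. Throughout, note that for every $\F$-positive $a$ the filter $\F_a$ is proper and $(f_i)_*(\F_a)$ is a proper filter on $T$. Since $\U$ is an ultrafilter, a proper filter $\G$ on $T$ equals $\U$ as soon as $\G\supseteq\U$: if $\G\supseteq\U$ and $B\in\G\setminus\U$, then $T\setminus B\in\U\subseteq\G$, which contradicts properness. So it suffices to find $a$ and $i$ with $f_i^{-1}(A)\in\F_a$ for every $A\in\U$.

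For $r=1$, hypothesis \eqref{eq:finiteunion} says exactly that $f_0^{-1}(A)\in\F$ for all $A\in\U$. We take $a=S$ and $i=0$.

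For the inductive step, suppose the statement holds for $r-1$. There are two cases.

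\emph{Case 1: $f_{r-1}^{-1}(A)\in\F$ for every $A\in\U$.} Then $a=S$ and $i=r-1$ work.

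\emph{Case 2: some $A_0\in\U$ has $f_{r-1}^{-1}(A_0)\notin\F$.} Then $a_0=S\setminus f_{r-1}^{-1}(A_0)$ is $\F$-positive. We check that $\F_{a_0}$ satisfies \eqref{eq:finiteunion} for the maps $f_i$, $i<r-1$. Given $A\in\U$, apply the hypothesis to $A\cap A_0\in\U$. The union $\bigcup_{i<r}f_i^{-1}(A\cap A_0)$ lies in $\F$. Intersecting with $a_0$ removes the contribution of $f_{r-1}$, because $f_{r-1}^{-1}(A\cap A_0)\subseteq f_{r-1}^{-1}(A_0)$, which is disjoint from $a_0$. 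Hence
\[
\bigcup_{i<r-1}f_i^{-1}(A)\supseteq a_0\cap\bigcup_{i<r}f_i^{-1}(A\cap A_0),
\]
and the right-hand side belongs to $\F_{a_0}$. By induction there are an $\F_{a_0}$-positive set $b$ and an index $i<r-1$ with $(f_i)_*((\F_{a_0})_b)=\U$. We then set $a=a_0\cap b$. This set is $\F$-positive, because $b$ meets every member $B\cap a_0$ of $\F_{a_0}$. Moreover $\F_a=(\F_{a_0})_b$, since both filters are generated by $\F\cup\{a_0,b\}$.

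The main subtlety is this bookkeeping. We must make sure that the reduced hypothesis really holds for the filter $\F_{a_0}$, which is done by intersecting with $A_0$, and that positivity composes correctly. Both points are checked above.

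Finally, the character bound. By \eqref{eq:pushcharacter} and \eqref{eq:restrictcharacter},
\[
\chi(\U)=\chi\bigl((f_i)_*(\F_a)\bigr)\leqslant\chi(\F_a)\leqslant\chi(\F).
\]
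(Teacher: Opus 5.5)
Your proof is correct, and it is essentially the paper's argument unrolled into an induction. The paper chooses at once a largest $J\subseteq r$ with sets $A_j\in\U$ such that $a=S\setminus\bigcup_{j\in J}f_j^{-1}(A_j)$ is $\F$-positive and takes any $i\notin J$; your one-map-at-a-time peeling builds such a set $a$ greedily, so you must check that the hypothesis passes to $\F_{a_0}$ (your $A\cap A_0$ step), whereas the paper uses the hypothesis only once, on $\bigcap_{j<r}A_j$, to see that $J\ne r$.
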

\begin{proof}
Choose $J\subseteq r$ of largest cardinality for which there are sets $A_j\in\U$, $j\in J$, such that
\[
a=S\setminus\bigcup_{j\in J}f_j^{-1}(A_j)
\]
is $\F$-positive. The empty choice is admissible. We have $J\ne r$: otherwise $A=\bigcap_{j<r}A_j\in\U$, and the filter member $\bigcup_{j<r}f_j^{-1}(A)$ would be disjoint from $a$.

Fix $i\notin J$. For every $A\in\U$, maximality says that $a\setminus f_i^{-1}(A)$ is not $\F$-positive. Thus some $B\in\F$ satisfies $B\cap a\subseteq f_i^{-1}(A)$, so $f_i^{-1}(A)\in\F_a$. The proper filter $(f_i)_*(\F_a)$ contains the ultrafilter $\U$ and therefore equals it. Equations~\eqref{eq:pushcharacter} and \eqref{eq:restrictcharacter} give the character bound.
\end{proof}

We shall use the following classical ultrafilter facts. Posp\'i\v{s}il's 1937 paper \cite{Pospisil1937} belongs to the independent-family construction of large families of ultrafilters. The maximal-character result used here is attributed to his 1939 paper \cite{Pospisil1939}; see \cite[Theorem 2.7]{Comfort} and the countable version in \cite{GuzmanHrusak}.

\begin{proposition}\label{prop:uniform}
For every infinite cardinal $\mu$ there is a uniform ultrafilter $\U$ on $\mu$ with $\chi(\U)=2^\mu$. Every uniform ultrafilter on $\mu$ has character strictly greater than $\mu$.
\end{proposition}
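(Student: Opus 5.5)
We prove the two assertions separately. The second, that every uniform ultrafilter $\U$ on $\mu$ has $\chi(\U)>\mu$, uses a diagonal argument. The first, existence of a uniform ultrafilter of character $2^\mu$, uses Hausdorff's independent family.

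For the second assertion, suppose $\{B_\alpha:\alpha<\mu\}\subseteq\U$ is a base. We build $C\subseteq\mu$ by recursion of length $\mu$: at stage $\alpha$, pick two distinct points $x_\alpha,y_\alpha\in B_\alpha$ not chosen before. This is possible because $|B_\alpha|=\mu$ by uniformity, while fewer than $\mu$ points have been used. Put $C=\{x_\alpha:\alpha<\mu\}$. Each $B_\alpha$ meets both $C$ (in $x_\alpha$) and $\mu\setminus C$ (in $y_\alpha$, since the chosen points are distinct). Hence no $B_\alpha$ is contained in $C$ or in its complement. But one of $C$, $\mu\setminus C$ lies in $\U$, contradicting the base property. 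If $\chi$ is computed with "least infinite cardinality", finite bases are also excluded, since a finite base would make $\U$ principal, against uniformity.

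For the first assertion, we fix an independent family $\{I_\xi:\xi<2^\mu\}$ of subsets of $\mu$ in which every finite Boolean combination $I_{\xi_1}^{\epsilon_1}\cap\dots\cap I_{\xi_n}^{\epsilon_n}$ (distinct $\xi$'s) has cardinality $\mu$. The standard construction works on the set of pairs $(a,\mathcal A)$ with $a\in[\mu]^{<\omega}$ and $\mathcal A\subseteq\mathcal P(a)$ finite. This set has cardinality $\mu$. For $Z\subseteq\mu$ we put $I_Z=\{(a,\mathcal A):Z\cap a\in\mathcal A\}$. Given finitely many distinct $Z$'s and signs, there are $\mu$ many finite $a$ separating them, and a suitable $\mathcal A$ for each, so the combinations are large. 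For each function $g:2^\mu\to2$ we consider the family
\[
\{I_\xi^{g(\xi)}:\xi<2^\mu\}\cup\{\mu\setminus D:|D|<\mu\}\cup\{\mu\setminus\textstyle\bigcap_{k\le n}I_{\xi_k}^{\epsilon_k}: \text{ combination of size }<\mu\}.
\]
By the size property the first two parts have the finite intersection property; the third part is vacuous. We extend it to an ultrafilter $\U_g$. This ultrafilter is uniform, since it contains all complements of small sets.

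The remaining step, which I expect to be the main obstacle, is to get one $g$ with $\chi(\U_g)=2^\mu$; the upper bound $2^\mu$ is trivial. The direct plan is a counting argument. A base of $\U_g$ of size $\tau<2^\mu$ determines $g$ on all $\xi$: indeed $I_\xi^{g(\xi)}\in\U_g$ must contain a base member. Each base set $B$ can decide, however, the value of $g(\xi)$ only through containment $B\subseteq I_\xi^{\epsilon}$. The cleaner route is to show directly that any $\tau<2^\mu$ members $B_\alpha\in\U_g$ fail to be a base. For each $\alpha$, $B_\alpha$ meets every finite combination of the $I_\xi^{g(\xi)}$ together with co-small sets in a set of size $\mu$. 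Choose $\xi$ outside the set of "relevant" indices. There we may need to modify the construction: we build the ultrafilter so that the $I_\xi$ remain independent modulo $\U$ on a set of indices. Concretely, following Pospíšil, we take $\U$ extending $\{I_\xi^{g(\xi)}\}$ plus co-small sets, with $g$ arbitrary. If $\{B_\alpha:\alpha<\tau\}$ were a base, then each $B_\alpha\in\U$, and every element of $\U$ contains one of them. Now take $\tau^+\le2^\mu$ many pairwise distinct indices. The sets $I_\xi^{g(\xi)}$ would each contain some $B_\alpha$. By pigeonhole, if $\tau<\cf$-issues are avoided, some $B_\alpha$ lies inside infinitely many (indeed $\mu$ many, after counting) $I_\xi^{g(\xi)}$. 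Then we contradict independence by shrinking. The safest fix is to add to the generating family all sets $\mu\setminus\bigcap_{\xi\in K}I_\xi^{g(\xi)}$ for infinite $K$ of size $\mu$ where this intersection is small. With this addition, $B_\alpha\subseteq\bigcap_{\xi\in K}I_\xi^{g(\xi)}$ for a $\mu$-sized $K$ forces the intersection to be large, and we need an independence property strong enough to make all such intersections have size $<\mu$ while keeping the finite intersection property. Verifying this compatibility is the delicate point; if it fails, I would instead cite \cite[Theorem 2.7]{Comfort}, as the paper indicates.
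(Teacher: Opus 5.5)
Your proof of the second assertion is correct and is the paper's diagonal argument. For the first assertion there is a genuine gap, exactly at the step you flag, and the construction as written cannot close it. The family $\{I_\xi^{g(\xi)}:\xi<2^\mu\}\cup\{\mu\setminus D:|D|<\mu\}$ imposes nothing beyond uniformity: every uniform ultrafilter contains it for the $g$ recording which of $I_\xi$, $\mu\setminus I_\xi$ it contains. So choosing $g$ gives no control over $\chi(\U_g)$; for $\mu=\omega$ in a model of $\mathfrak u<\cont$, some $\U_g$ has character $\mathfrak u$.

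Your proposed repair also goes the wrong way. Adding $\mu\setminus\bigcap_{\xi\in K}I_\xi^{g(\xi)}$ only when the intersection has size $<\mu$ adds nothing, because such sets are already co-small. Small infinite intersections are neither needed nor generally available; for instance $\bigcap_\xi I_\xi$ contains every pair $(a,\mathcal P(a))$. As written, the first assertion rests only on the fallback citation.

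The missing observation is that these complements can be added unconditionally. Each of them contains $\mu\setminus I_\eta$ for every single index $\eta\in K$, so the finite intersection property reduces to independence. As in the paper, generate a filter by all $I_\xi$, all co-small sets, and all sets $\mu\setminus\bigcap_{\xi\in K}I_\xi$ with $K$ countably infinite.

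To check the finite intersection property, take finitely many positive generators $I_{\xi_1},\dots,I_{\xi_n}$ and sets $K_1,\dots,K_m$, and choose $\eta_j\in K_j\setminus\{\xi_1,\dots,\xi_n\}$. Then $I_{\xi_1}\cap\dots\cap I_{\xi_n}\cap(\mu\setminus I_{\eta_1})\cap\dots\cap(\mu\setminus I_{\eta_m})$ is a Boolean combination of distinct members (after deleting repetitions among the $\eta_j$). It therefore has size $\mu$, lies in every $\mu\setminus\bigcap_{\xi\in K_j}I_\xi$, and survives removal of finitely many sets of size $<\mu$.

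Now let $\U$ be an ultrafilter extension and suppose $\B$ is a base with $|\B|<2^\mu$. Pick $B_\xi\in\B$ with $B_\xi\subseteq I_\xi$. If every fibre of $\xi\mapsto B_\xi$ were finite, then $2^\mu\leqslant|\B|\cdot\omega<2^\mu$. So some $B\in\B$ lies in $\bigcap_{\xi\in K}I_\xi$ for a countably infinite $K$, while $\mu\setminus\bigcap_{\xi\in K}I_\xi\in\U$, a contradiction. No cofinality issue arises, and no choice of signs $g$ is needed.
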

\begin{proof}
We recall the independent-family argument for the first assertion. Let
\[
\Omega=\{(s,H):s\in[\mu]^{<\omega},\ H\subseteq\mathcal P(s)\}.
\]
This set has cardinality $\mu$. For each $x\subseteq\mu$, put
\[
I_x=\{(s,H)\in\Omega:x\cap s\in H\}.
\]
The family $\mathcal I=\{I_x:x\subseteq\mu\}$ has cardinality $2^\mu$, and every finite Boolean combination of distinct members has cardinality $\mu$. Indeed, choose a finite $s_0$ distinguishing the finitely many sets $x$ involved. For each finite $s\supseteq s_0$, choose $H$ to realize the prescribed membership pattern. There are $\mu$ such sets $s$.

Generate a filter by all members of $\mathcal I$, all complements of subsets of $\Omega$ of cardinality less than $\mu$, and all sets
\[
\Omega\setminus\bigcap\mathcal J
\qquad(\mathcal J\in[\mathcal I]^\omega).
\]
Every finite intersection of generators has cardinality $\mu$. To verify this, for each of the finitely many countably infinite families $\mathcal J$ choose a member outside the finite list of positive generators. The Boolean combination making the positive generators true and the chosen members false has cardinality $\mu$; removing finitely many sets of cardinality less than $\mu$ preserves that cardinality. The Kuratowski--Zorn lemma extends this filter to an ultrafilter $\U$, which is uniform.

If $\B$ were a base of $\U$ of cardinality $\tau<2^\mu$, choose $B_I\in\B$ with $B_I\subseteq I$ for each $I\in\mathcal I$. Some base member must be chosen for infinitely many distinct $I$: otherwise $2^\mu\leqslant\tau\cdot\omega<2^\mu$. Choose a countably infinite family $\mathcal J$ with that common base member $B$. Then $B\subseteq\bigcap\mathcal J$ while $\Omega\setminus\bigcap\mathcal J\in\U$, a contradiction. Identifying $\Omega$ with $\mu$ proves the first assertion.

For the second, suppose a uniform ultrafilter had a base $\{B_\alpha:\alpha<\mu\}$, repeating members if necessary. Recursively choose distinct points $x_\alpha,y_\alpha\in B_\alpha$ avoiding every earlier choice. At each stage fewer than $\mu$ points are excluded, even when $\mu$ is singular. The disjoint sets $\{x_\alpha:\alpha<\mu\}$ and $\{y_\alpha:\alpha<\mu\}$ both meet every base member. Both would therefore belong to the ultrafilter, which is impossible.
\end{proof}

In the countable case, the last argument can also be expressed by first making a hypothetical countable base decreasing by finite intersections. A sequence of distinct points chosen diagonally from that base has even and odd subsequences both positive for the ultrafilter, giving the same contradiction.

\section{Star spaces and the ultrafilter character spectrum}

Let $\U$ be uniform on $\mu$. The sets in \eqref{eq:starfilter} generate a proper filter, since
\[
\varnothing\ne H_{A\cap B}^{(r)}\subseteq H_A^{(r)}\cap H_B^{(r)}
\qquad(A,B\in\U).
\]
If $a\subseteq E_r(\mu)$ has cardinality less than $\mu$, then $|\bigcup a|<\mu$ and uniformity gives
\[
A=\mu\setminus\bigcup a\in\U,\qquad H_A^{(r)}\cap a=\varnothing.
\]
In particular the filter is free. The stars are coinitial because $L_\xi^{(r)}\subseteq H_A^{(r)}$ whenever $\xi\in A$. Any at most $r$ stars intersect: extend their set of centers to an $r$-element subset of $\mu$.

\begin{proof}[Proof of Theorem~\ref{main:star}]
For $e\in E_r(\mu)$, let $f_i(e)$ be its $i$-th element in increasing order, $i<r$. Then
\[
H_A^{(r)}=\bigcup_{i<r}f_i^{-1}(A).
\]
If $\G\supseteq\F_\U^{(r)}$ is proper, Lemma~\ref{lem:finite} gives $\chi(\U)\leqslant\chi(\G)$. If $\mathcal A$ is a base for $\U$, then $\{H_A^{(r)}:A\in\mathcal A\}$ is a base for $\F_\U^{(r)}$. This proves the character assertions, and Proposition~\ref{prop:extensions} gives $\pcc(p,Y)=\chi(\U)$.

The stars give $\pcl(p,Y)\leqslant\mu$. Suppose $\mathcal P$ is a family of non-empty open sets with $|\mathcal P|<\mu$. Choose an edge $e_P\in P\cap E_r(\mu)$ for each $P\in\mathcal P$ and put $C=\bigcup_{P\in\mathcal P}e_P$. Since $|C|<\mu$, the set $A=\mu\setminus C$ belongs to $\U$. The neighborhood $\{p\}\cup H_A^{(r)}$ contains none of the selected edges, and therefore contains no member of $\mathcal P$. Thus $\pc(p,Y)\geqslant\mu$.

The space has cardinality $\mu$, whereas its centered local $\pi$-character is $\chi(\U)>\mu$ by Proposition~\ref{prop:uniform}. Proposition~\ref{prop:chains} therefore excludes a decreasing local $\pi$-base at $p$.
\end{proof}

\begin{remark}\label{rem:original}
For $\mu=\omega$ and $r=2$, Kania defines his filter by requiring $\{n:L_n^{(2)}\subseteq B\}\in\U$. This is equivalent to \eqref{eq:starfilter}: in one direction take this set as $A$, and in the other use $A\subseteq\{n:L_n^{(2)}\subseteq B\}$. Theorem~\ref{main:star} computes the invariant of the original star space itself.
\end{remark}

Write
\[
\operatorname{Sp}_\chi(\omega)=
\{\chi(\U):\U\text{ is a free ultrafilter on }\omega\},
\qquad \mathfrak u=\min\operatorname{Sp}_\chi(\omega).
\]
Here $\mathfrak u$ is the ultrafilter number. The actual-inclusion and almost-inclusion definitions give the same infinite character on $\omega$, by adjoining the countably many finite deletions.

\begin{corollary}\label{cor:spectrum}
For each fixed $r\geqslant2$, the possible centered local $\pi$-characters at the non-isolated points of the countable star spaces $X(\F_\U^{(r)})$ are exactly $\operatorname{Sp}_\chi(\omega)$. Their minimum is $\mathfrak u$, and their maximum is $\cont$. In particular, Kania's lower estimate $\mathfrak p$ improves to $\mathfrak u$ for every defining free ultrafilter.
\end{corollary}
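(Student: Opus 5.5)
This corollary is essentially Theorem~\ref{main:star} specialized to $\mu=\omega$, combined with Proposition~\ref{prop:uniform}. Fix $r\geqslant2$. A uniform ultrafilter on $\omega$ is exactly a free ultrafilter, since a member of a free ultrafilter cannot be finite. For every free $\U$ on $\omega$, Theorem~\ref{main:star} gives
\[
\pcc(p,X(\F_\U^{(r)}))=\chi(\U).
\]
The space is countable because $E_r(\omega)=[\omega]^r$ is countable. So the set of values realized by the countable star spaces is exactly $\{\chi(\U):\U\text{ free on }\omega\}=\operatorname{Sp}_\chi(\omega)$. The only point to check is the convention on $\chi$. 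The paper uses actual inclusion; the remark before the corollary says this agrees with the almost-inclusion convention on $\omega$. So $\operatorname{Sp}_\chi(\omega)$ is the standard character spectrum, whichever definition one prefers.

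The minimum of $\operatorname{Sp}_\chi(\omega)$ is $\mathfrak u$ by definition. For the maximum, Proposition~\ref{prop:uniform} with $\mu=\omega$ gives a free ultrafilter of character $2^\omega=\cont$. Conversely, every filter on $\omega$ has at most $\cont$ members, so every character is at most $\cont$. Hence the maximum is $\cont$.

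For the final sentence, the classical inequality $\mathfrak p\leqslant\mathfrak u$ shows that every realized value satisfies $\chi(\U)\geqslant\mathfrak u\geqslant\mathfrak p$. So, for every defining free ultrafilter, the exact value $\chi(\U)$ is at least $\mathfrak u$, which improves Kania's lower estimate $\mathfrak p$. If one prefers not to cite $\mathfrak p\leqslant\mathfrak u$, the improvement can be stated simply as the exact computation together with the bound $\geqslant\mathfrak u$.

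I expect no real obstacle; the only care needed is in matching conventions. First, the identification of free with uniform ultrafilters on $\omega$. Second, the agreement of the two definitions of character, so that the minimum really is the standard $\mathfrak u$.
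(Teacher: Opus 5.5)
Your proposal is correct and follows the paper's own route: you specialize Theorem~\ref{main:star} to $\mu=\omega$, where uniform means free, read off the minimum from the definition of $\mathfrak u$, and get the maximum from Proposition~\ref{prop:uniform} together with the trivial bound $\chi(\U)\leqslant|\U|\leqslant\cont$, which the paper leaves implicit. The only difference is that the paper proves $\mathfrak p\leqslant\mathfrak u$ in one line, whereas you cite it as classical; that citation suffices, so your fallback phrasing is unnecessary. The paper's argument is that a free ultrafilter base has the strong finite intersection property and no infinite pseudointersection, since splitting a candidate into two infinite pieces yields a piece whose complement lies in the ultrafilter.
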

\begin{proof}
The exact spectrum and its extrema follow from Theorem~\ref{main:star}, the definition of $\mathfrak u$, and Proposition~\ref{prop:uniform}. For completeness, $\mathfrak p\leqslant\mathfrak u$: a free ultrafilter base has the strong finite-intersection property and no infinite pseudointersection. If $P$ were such a pseudointersection, split it into two infinite pieces. One piece has complement in the ultrafilter, contradicting almost containment in that complement.
\end{proof}

\begin{corollary}\label{cor:countable}
For each infinite $\mu$ and fixed finite $r\geqslant2$, there is a zero-dimensional Tychonoff space $Y$ of cardinality $\mu$, with one non-isolated point, such that
\[
\pc(Y)=\pcl(Y)=\mu,\qquad\pcc(Y)=2^\mu.
\]
Every point has an $r$-linked local $\pi$-base of cardinality at most $\mu$, and the non-isolated point has no decreasing local $\pi$-base.
\end{corollary}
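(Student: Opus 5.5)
The plan is to take $Y=X(\F_\U^{(r)})$ and choose the uniform ultrafilter $\U$ on $\mu$ supplied by Proposition~\ref{prop:uniform}, the one with $\chi(\U)=2^\mu$. Theorem~\ref{main:star} then does almost all of the work. It tells us that $Y$ is zero-dimensional and Tychonoff, has cardinality $\mu$, and has a unique non-isolated point $p$. At $p$ it gives $\pc(p,Y)=\pcl(p,Y)=\mu$ and $\pcc(p,Y)=\chi(\U)=2^\mu$. It also says that the stars form an $r$-linked local $\pi$-base of cardinality $\mu$ at $p$, and that $p$ admits no decreasing local $\pi$-base.

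It remains to pass from these pointwise values to the global invariants, which are suprema over all points. The only other points are the isolated ones. For an isolated point $x$, the singleton $\{x\}$ is by itself a local $\pi$-base. This family is finite, decreasing, and $r$-linked, so by the convention that finite witnesses give value $\omega$, every invariant equals $\omega$ at $x$. Since $\mu\geqslant\omega$, the suprema are attained at $p$. This gives $\pc(Y)=\pcl(Y)=\mu$ and $\pcc(Y)=2^\mu$.

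The same observation handles the base statement: every point has an $r$-linked local $\pi$-base of cardinality at most $\mu$, namely the singleton at isolated points and the stars at $p$.

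No real obstacle is expected here. The only care needed is at two bookkeeping points. First, the global suprema must ignore the trivial isolated points. Second, the nonexistence of a decreasing base at $p$ is taken from Theorem~\ref{main:star}, which in turn rests on Proposition~\ref{prop:chains} together with $|Y|=\mu<2^\mu$.
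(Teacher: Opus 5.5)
Your proposal is correct and matches the paper's proof. The paper likewise takes a uniform ultrafilter on $\mu$ of character $2^\mu$ from Proposition~\ref{prop:uniform} and applies Theorem~\ref{main:star}. Your extra bookkeeping, that isolated points contribute only $\omega$ to the global suprema, is exactly what the paper leaves implicit.
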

\begin{proof}
Take a uniform ultrafilter on $\mu$ of character $2^\mu$ and apply Theorem~\ref{main:star}.
\end{proof}
\section{The matrix component and finite sums}

We recall Kania's matrix construction \cite[Section 4 and Remark 5.2]{Kania}. The proof below records that regularity of the smaller cardinal is unnecessary.

Let $\omega<\kappa<\lambda$, where $\lambda$ is regular, and set
\[
S=\kappa\times\lambda,\qquad
d_{\alpha,F}=(\kappa\times\alpha)\cup(F\times\lambda)
\quad(\alpha<\lambda,\ F\in[\kappa]^{<\omega}).
\]
Put $D=\{d_{\alpha,F}:\alpha<\lambda,\ F\in[\kappa]^{<\omega}\}$. Each element of $D$ is both a point of the discrete space used below and a subset of $S$. For finite $e\subseteq S$, let
\[
G_e=\{d\in D:e\subseteq d\},\qquad
\G_{\kappa,\lambda}=\{A\subseteq D:G_e\subseteq A
\text{ for some }e\in[S]^{<\omega}\}.
\]
The representation $d_{\alpha,F}$ is unique: its full columns determine $F$, and any column outside $F$ determines $\alpha$. Hence $|D|=\lambda$.

\begin{proposition}\label{prop:matrix}
The family $\G_{\kappa,\lambda}$ is a free filter. At the non-isolated point $q$ of $Z=X(\G_{\kappa,\lambda})$,
\[
\pc(q,Z)=\pcl(q,Z)=\pcc(q,Z)=\kappa,
\qquad \pcd(q,Z)=\lambda.
\]
\end{proposition}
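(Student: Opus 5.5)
The plan is to handle each invariant directly through the trace description of Lemma~\ref{lem:traces}, using Proposition~\ref{prop:extensions} for the centered value and Proposition~\ref{prop:chains} for the decreasing one.

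\emph{Filter.} First I would check that $\G_{\kappa,\lambda}$ is a free filter. We have $G_e\cap G_{e'}=G_{e\cup e'}$. Each $G_e$ is non-empty: take $\alpha<\lambda$ above all second coordinates of $e$, so that $d_{\alpha,\varnothing}\in G_e$. For freeness, given $d_{\alpha,F}$, pick $\xi\in\kappa\setminus F$ and the point $(\xi,\alpha)\notin d_{\alpha,F}$; then $d_{\alpha,F}\notin G_{\{(\xi,\alpha)\}}$.

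\emph{Upper bounds.} For $F\in[\kappa]^{<\omega}$ put
\[
W_F=\{d_{\alpha,F'}:\alpha<\lambda,\ F'\supseteq F\}.
\]
If $F$ contains the first coordinates of $e$, then every member of $W_F$ contains $F\times\lambda\supseteq e$, so $W_F\subseteq G_e$. Moreover $W_F\cap W_{F'}=W_{F\cup F'}\neq\varnothing$. Hence $\{W_F\}$ is a centered family of cardinality $\kappa$ that is coinitial in the filter, which gives $\pcc(q,Z)\leqslant\kappa$.

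For the decreasing bound, put $R_\alpha=\{d_{\beta,\varnothing}:\beta\geqslant\alpha\}$. Then $R_\alpha\subseteq G_e$ once $\alpha$ exceeds the second coordinates of $e$. So $\{R_\alpha:\alpha<\lambda\}$ is a decreasing local $\pi$-base, $\pcd(q,Z)$ is defined, and $\pcd(q,Z)\leqslant\lambda$.

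\emph{Lower bound $\pc\geqslant\kappa$.} Let $\mathcal P$ be a family of non-empty open sets with $|\mathcal P|<\kappa$. Choose $d_{\alpha_P,F_P}\in P$ for each $P$. Since $\kappa<\lambda$ and $\lambda$ is regular, $\beta=\sup_P\alpha_P<\lambda$. The union of the $F_P$ has cardinality less than $\kappa$, so pick $\xi$ outside it. Then $(\xi,\beta)\notin d_{\alpha_P,F_P}$ for every $P$, so $G_{\{(\xi,\beta)\}}$ contains no member of $\mathcal P$. Combined with the inequalities $\pc\leqslant\pcl\leqslant\pcc$, this settles the first three values. Regularity of $\kappa$ is not used anywhere.

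\emph{Lower bound $\pcd\geqslant\lambda$ (main step).} Suppose there were a decreasing local $\pi$-base of cardinality less than $\lambda$. By Proposition~\ref{prop:chains}, refine it to a strictly decreasing sequence $(C_\gamma)_{\gamma<\tau}$, where $\tau<\lambda$ and $\tau$ is $\omega$ or a regular uncountable cardinal.

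Pick $d_{\alpha_\gamma,F_\gamma}\in C_\gamma$ and set $\beta=\sup_\gamma\alpha_\gamma<\lambda$, again using regularity of $\lambda$. Define
\[
K_\gamma=\{\xi<\kappa: C_\gamma\subseteq G_{\{(\xi,\beta)\}}\}.
\]
Each $K_\gamma$ is finite. Indeed, if $\xi\in K_\gamma$ then $(\xi,\beta)\in d_{\alpha_\gamma,F_\gamma}$, and since $\beta\geqslant\alpha_\gamma$ this forces $\xi\in F_\gamma$. The sets $K_\gamma$ increase with $\gamma$, because the $C_\gamma$ decrease. Coinitiality requires $\bigcup_\gamma K_\gamma=\kappa$.

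Now consider the two cases for $\tau$. If $\tau=\omega$, the union is countable. If $\tau$ is uncountable, the union is still countable: an increasing $\tau$-sequence of finite sets has all its members contained in one another, and it cannot reach unbounded size at any stage, so it eventually stabilizes. In either case the union is at most countable, contradicting $\kappa>\omega$.

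I expect this last step to be the main obstacle. The key ideas there are choosing a single row $\beta$ uniformly for the whole chain, and exploiting the finiteness of the $K_\gamma$.
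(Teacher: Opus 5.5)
Your proposal is correct. Apart from the last step it follows the paper: your $W_F$ are the paper's $B_F$, your $R_\alpha$ is a thinner version of its $C_\alpha$, and your argument for $\pc(q,Z)\geqslant\kappa$ is the same.

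For $\pcd(q,Z)\geqslant\lambda$ you reverse the order of the two choices.

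\begin{itemize}
\item The paper fixes a column first. It takes $K_P=\bigcap(P\cap D)$ and notes that the sets of full columns of $K_P$ are finite and form a chain. It chooses $\xi$ outside their countable union, picks in each $P$ a point $d_{\alpha_P,F_P}$ with $\xi\notin F_P$, and takes $\beta=\sup_P\alpha_P$.
\item You fix the row $\beta$ first, from arbitrary representatives. The admissible columns at row $\beta$ are then trapped in the finite sets $F_\gamma$, increase along the chain, and must exhaust $\kappa$.
\end{itemize}

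Your route avoids the intersections $K_P$ and the constrained second selection, so it is slightly leaner. Both use the same three facts: a chain of finite sets has countable union, $\kappa>\omega$, and $\lambda$ is regular.

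Two small repairs are needed.

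\begin{itemize}
\item Members of a local $\pi$-base may contain $q$. So $K_\gamma$ should be defined by $C_\gamma\subseteq\{q\}\cup G_{\{(\xi,\beta)\}}$, or via the traces $C_\gamma\cap D$.
\item The detour through Proposition~\ref{prop:chains} and the case split on $\tau$ are unnecessary, and the claim that $\tau$ is $\omega$ or regular uncountable is not quite what that proposition gives. A refined sequence has regular order type only after passing to a cofinal subsequence, and it may even have a last term (your argument still covers that case). Your column sets make sense for any chain $\mathcal P$ with $|\mathcal P|<\lambda$, and $\beta=\sup_{P}\alpha_P<\lambda$ still holds. Distinct members of a chain of finite sets have distinct cardinalities, so the union is countable outright, with no case distinction.
\end{itemize}
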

\begin{proof}
Every $G_e$ is non-empty: choose $\alpha$ above the second coordinates appearing in $e$. Also $G_e\cap G_f=G_{e\cup f}$. To omit finitely many $d_{\alpha_j,F_j}$, choose $\xi\in\kappa\setminus\bigcup_jF_j$ and $\beta\geqslant\alpha_j$ for every $j$. Then $G_{\{(\xi,\beta)\}}$ omits those points. This proves freeness.

For finite $F\subseteq\kappa$, let
\[
B_F=\{d_{\alpha,H}:F\subseteq H\}.
\]
These sets form a centered family of cardinality $\kappa$, and they are coinitial in $\G_{\kappa,\lambda}$: take $F$ to contain all first coordinates of a given finite $e$. Thus $\pcc(q,Z)\leqslant\kappa$.

Suppose $\mathcal P$ is a family of non-empty open sets with $|\mathcal P|<\kappa$. Choose $d_{\alpha_P,F_P}\in P\cap D$ for each $P\in\mathcal P$. Since $\bigcup_{P\in\mathcal P}F_P$ has cardinality less than $\kappa$, choose $\xi$ outside this union. Regularity of $\lambda$ gives $\beta<\lambda$ with $\beta\geqslant\alpha_P$ for all $P\in\mathcal P$. None of the chosen points belongs to $G_{\{(\xi,\beta)\}}$, so no member of $\mathcal P$ is contained in $\{q\}\cup G_{\{(\xi,\beta)\}}$. Hence $\pc(q,Z)\geqslant\kappa$.

The sets
\[
C_\alpha=\{d_{\beta,F}:\beta\geqslant\alpha\}\qquad(\alpha<\lambda)
\]
form a decreasing coinitial family, so $\pcd(q,Z)\leqslant\lambda$. To prove the reverse inequality, let $\mathcal P$ be an inclusion-chain local $\pi$-base with $|\mathcal P|<\lambda$. For $P\in\mathcal P$, put
\[
T_P=P\cap D,\qquad K_P=\bigcap_{d\in T_P}d\subseteq S,
\qquad F_P^*=\{\xi<\kappa:\{\xi\}\times\lambda\subseteq K_P\}.
\]
All $T_P$ are non-empty. Each $F_P^*$ is finite, and the family of these sets is a chain, since $P\mapsto K_P$ reverses inclusion. A chain of finite sets has only countably many distinct members. Thus $\bigcup_{P\in\mathcal P}F_P^*$ is countable; since $\kappa>\omega$, choose $\xi$ outside it.

For each $P$, choose $d_{\alpha_P,F_P}\in T_P$ with $\xi\notin F_P$. Then
\[
K_P\cap(\{\xi\}\times\lambda)\subseteq\{\xi\}\times\alpha_P.
\]
Regularity of $\lambda$ gives $\beta=\sup_{P\in\mathcal P}\alpha_P<\lambda$, so $(\xi,\beta)$ belongs to no $K_P$. However, locality applied to $\{q\}\cup G_{\{s\}}$ for each $s\in S$ implies $S=\bigcup_{P\in\mathcal P}K_P$. This contradiction proves the claim.
\end{proof}

Let $\F_0$ and $\F_1$ be free filters on disjoint sets $S_0$ and $S_1$. Their sum is the free filter
\[
\F_0\oplus\F_1=
\{A\subseteq S_0\mathbin{\dot\cup}S_1:A\cap S_i\in\F_i\ (i<2)\}.
\]
Here $S_0\mathbin{\dot\cup}S_1$ denotes the disjoint union: it is the ordinary union $S_0\cup S_1$, with the dot emphasizing that $S_0\cap S_1=\varnothing$.
The centered and decreasing assertions below use Kania's projection argument
\cite[Lemma 2.7]{Kania}; we also record the ordinary counterpart.

\begin{lemma}\label{lem:sum}
For a sum of two free filters, ordinary local $\pi$-character and centered local $\pi$-character are the minima of the respective component values. A decreasing local $\pi$-base exists in the sum exactly when it exists in a component, and its least infinite cardinality is the minimum over the components in which it exists. These conclusions extend to finite sums.
\end{lemma}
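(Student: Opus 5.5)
The plan is to work throughout with traces, using Lemma~\ref{lem:traces}. Write $\F=\F_0\oplus\F_1$ on $S=S_0\mathbin{\dot\cup}S_1$, and let $p$, $p_0$, $p_1$ be the non-isolated points of $X(\F)$, $X(\F_0)$, $X(\F_1)$. By that lemma, a family $\B$ of non-empty subsets of $S$ is a (centered, decreasing) local $\pi$-base at $p$ exactly when it is coinitial in $\F$ and has the relevant intersection property. The upper bounds are the easy direction. Suppose $\B_i$ is coinitial in $\F_i$. Every $A\in\F$ has $A\cap S_i\in\F_i$, so $A$ contains a member of $\B_i$. Hence $\B_i$ itself is coinitial in $\F$, and it keeps its centeredness or chain structure. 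This gives $\pc(p)\leqslant\min_i\pc(p_i)$, the same for $\pcc$, and it shows that a decreasing base at $p$ exists whenever one exists at some $p_i$, with $\pcd(p)\leqslant\min$ over those $i$.

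For the ordinary lower bound, take $\B$ coinitial in $\F$. Let $\B_i=\{B\cap S_i:B\in\B,\ B\cap S_i\neq\varnothing\}$. I claim that some $\B_i$ is coinitial in $\F_i$. If not, choose $A_i\in\F_i$ containing no member of $\B_i$ for each $i$. Then $A_0\cup A_1\in\F$ contains some $B\in\B$. Since $B\neq\varnothing$, it meets some $S_i$, and then $B\cap S_i\subseteq A_i$, a contradiction. Since $|\B_i|\leqslant|\B|$, we get $\pc(p)\geqslant\min_i\pc(p_i)$.

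For the centered and decreasing cases, this is Kania's projection argument, and it is the main obstacle, because the traces $B\cap S_i$ of a centered family need not be centered. The fix is to pass to a subfamily. For centered $\B$, let $\B^{(i)}=\{B\in\B:B\cap S_i\in\F_i^+\}$, where $\F_i^+$ denotes the sets positive for $\F_i$. The claim is that some $i$ has $\{B\cap S_i:B\in\B\}$ coinitial in $\F_i$ and centered. Here is the argument. By Proposition~\ref{prop:extensions}'s reasoning, the traces generate a proper filter $\G\supseteq\F$ with $\chi(\G)\leqslant|\B|$. Since $S_0\cup S_1\in\G$ and $\G$ is a filter, $\G$ cannot contain both $S\setminus S_0=S_1$ and $S_0$ unless $\varnothing\in\G$. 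So some $S_i$ is $\G$-positive; take $i$ with $S_i\in\G^+$. Then the restriction of $\G$ to $S_i$, namely $\{G\cap S_i:G\in\G\}$, is a proper filter containing $\F_i$ (because $\F_i$-sets plus $S_{1-i}$ lie in $\F\subseteq\G$), and it has character at most $\chi(\G)$ by \eqref{eq:restrictcharacter}. Proposition~\ref{prop:extensions} then gives $\pcc(p_i)\leqslant|\B|$. (If $S_i\notin\G^+$ for one $i$, then $S_{1-i}\in\G$, so that index works anyway.)

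For chains, take $\B$ decreasing and coinitial in $\F$. For each $i$, the set $\{B\in\B:B\cap S_i\neq\varnothing\}$ is an initial segment of the chain from above. I will show that for some $i$ every $B\in\B$ meets $S_i$. If not, pick $B_i$ missing $S_i$; the smaller of $B_0,B_1$ misses both and is empty, which is impossible. For this $i$, the traces $B\cap S_i$ form a chain of non-empty sets. I claim they are coinitial in $\F_i$. For $A\in\F_i$, the set $A\cup S_{1-i}\in\F$ contains some $B$. If $B$ also met $S_{1-i}$ nontrivially, that would do no harm: $B\cap S_i\subseteq A$ holds regardless. So the traces give a decreasing base at $p_i$ of size at most $|\B|$, which proves existence in a component and the lower bound. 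Finite sums then follow by induction, using $\F_0\oplus\F_1\oplus\F_2=(\F_0\oplus\F_1)\oplus\F_2$.
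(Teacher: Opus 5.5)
Your proof is correct. The upper bounds and the ordinary lower bound are the paper's arguments. For the centered case the paper stays with $\B$ itself. If neither coordinate family $\{B\cap S_i:B\in\B\}$ had the finite intersection property, combining the finitely many witnesses from both coordinates would give an empty finite intersection of members of $\B$. So one full coordinate family is centered, and locality applied to $A\cup S_{1-i}$ makes it coinitial in $\F_i$. Since projection preserves inclusion, the same step handles decreasing bases.

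Your observation that the disjoint sets $S_0,S_1$ cannot both lie in $\G$ is the same fact in filter language: $S_{1-i}\in\G$ exactly when some finite intersection of members of $\B$ misses $S_i$. You then bound $\pcc(p_i)$ through the trace filter on $S_i$, \eqref{eq:restrictcharacter} and Proposition~\ref{prop:extensions}, instead of exhibiting the projected base. That is why you need a separate chain argument; yours, via upward-closed subsets of the chain, is fine.

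Your route yields a purely filter-theoretic statement: every proper filter extending $\F_0\oplus\F_1$ induces a proper extension of some $\F_i$ of no larger character. The paper's route gets a single projection step covering both the centered and decreasing cases.

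Three editorial points:
\begin{itemize}
\item The family $\B^{(i)}$ you define is never used.
\item The claim you announce, that some full coordinate family is centered and coinitial, is not what your argument actually proves. It does follow at once from $S_i$ being $\G$-positive, since every finite intersection of members of $\B$ belongs to $\G$ and hence meets $S_i$.
\item The remark that $S_0\cup S_1\in\G$ is irrelevant; what matters is $S_0\cap S_1=\varnothing$.
\end{itemize}
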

\begin{proof}
Let $\B$ be a local $\pi$-base at the non-isolated point of $X(\F_0\oplus\F_1)$. For ordinary character, take the non-empty coordinate traces
\[
\mathcal P_i=\{B\cap S_i:B\in\B,\ B\cap S_i\ne\varnothing\}.
\]
If neither family were coinitial, choose $A_i\in\F_i$ containing no member of $\mathcal P_i$. Locality would give $B\subseteq\{p\}\cup A_0\cup A_1$. At least one trace is non-empty, contradicting the choice of $A_i$. Thus one coordinate supplies an ordinary local $\pi$-base of cardinality at most $|\B|$.

If $\B$ is centered, its traces on $S_0\mathbin{\dot\cup}S_1$ have the finite-intersection property. If both coordinate families failed that property, finitely many members witnessing failure in each coordinate would together contradict it. Hence one full coordinate family is centered and, by locality, coinitial in the corresponding filter. Projection also preserves chains. Conversely, a coinitial family in one component remains coinitial in the sum and retains each of the stated properties. Induction gives the finite-sum assertions.
\end{proof}

Linked families need not admit such a coordinate projection. For an ultrafilter component a different argument is available.

\begin{lemma}\label{lem:ultra-linked}
Let $\U$ be a free ultrafilter on $S_0$ and $\F$ a free filter on a disjoint set $S_1$. Write $a=\pc(p_1,X(\F))$ and $b=\pcl(p_1,X(\F))$. At the non-isolated point $p$ of $X(\U\oplus\F)$,
\[
\min\{\chi(\U),a\}\leqslant\pcl(p,X(\U\oplus\F))
\leqslant\min\{\chi(\U),b\}.
\]
\end{lemma}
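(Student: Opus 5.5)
The upper bound comes from exhibiting two linked local $\pi$-bases. The lower bound takes an arbitrary linked local $\pi$-base $\B$ and shows that either its $S_1$-traces are coinitial in $\F$, or its $S_0$-part controls $\U$.

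\emph{Upper bound.} A base of $\U$ consists of members of an ultrafilter. Any two of them intersect, so it is linked, and it is coinitial in $\U$; hence it is coinitial in $\U\oplus\F$. Viewed as open subsets of $S_0\subseteq X(\U\oplus\F)$, it is therefore a linked local $\pi$-base of cardinality $\chi(\U)$. Next take a linked local $\pi$-base at $p_1$ in $X(\F)$ of cardinality $b$. By Lemma~\ref{lem:traces} its traces on $S_1$ are linked, non-empty and coinitial in $\F$. They remain coinitial in the sum and are open in $X(\U\oplus\F)$. This gives the bound $\min\{\chi(\U),b\}$.

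\emph{Lower bound.} Let $\B$ be a linked local $\pi$-base at $p$ with $|\B|<\min\{\chi(\U),a\}$; we derive a contradiction. Put
\[
\mathcal P_i=\{B\cap S_i:B\in\B,\ B\cap S_i\ne\varnothing\}.
\]
Each member of $\mathcal P_1$ is a non-empty open subset of $X(\F)$. Since $|\mathcal P_1|<a$, the family $\mathcal P_1$ is not coinitial in $\F$, so fix $A_1\in\F$ containing no member of $\mathcal P_1$. For every $A\in\U$, locality gives $B\in\B$ with $B\subseteq\{p\}\cup A\cup A_1$. The trace $B\cap S_1$ is then either empty or a member of $\mathcal P_1$ inside $A_1$. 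The latter is impossible, so $B\cap S_1=\varnothing$ and $B\subseteq\{p\}\cup A$. Let $\B_0=\{B\in\B:B\cap S_1=\varnothing\}$. Then $\{B\cap S_0:B\in\B_0\}$ is coinitial in $\U$.

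Two members of $\B_0$ intersect in a non-empty open set. By the argument of Lemma~\ref{lem:traces}, every non-empty open set meets $S$, and here that open set lies in $\{p\}\cup S_0$. So the traces $B\cap S_0$, $B\in\B_0$, are pairwise intersecting non-empty subsets of $S_0$, and they are coinitial in $\U$.

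The remaining step, which I expect to be the main obstacle, is to see that these traces actually lie in $\U$. Suppose $C=B\cap S_0\notin\U$. Then $S_0\setminus C\in\U$, so coinitiality yields $B'\in\B_0$ with $B'\cap S_0\subseteq S_0\setminus C$. This trace is disjoint from $C$, contradicting linkedness. Hence every such trace belongs to $\U$, and being coinitial, the traces form a base of $\U$. This gives $\chi(\U)\leqslant|\B_0|\leqslant|\B|$, a contradiction. (If $|\B|$ is finite, use the convention that finite witnesses count as $\omega$; $\chi(\U)$ is infinite, so nothing changes.) Thus $\pcl(p,X(\U\oplus\F))\geqslant\min\{\chi(\U),a\}$.
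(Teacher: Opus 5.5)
Your proposal is correct and follows the paper's proof essentially step for step. Both arguments get the upper bound from a base of $\U$ or a linked $\pi$-base of $X(\F)$. For the lower bound, both choose $A_1\in\F$ avoiding all non-empty $S_1$-traces, which forces coinitial members of $\B$ into $S_0$, and then use linkedness with the ultrafilter property to show those traces form an actual base of $\U$.
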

\begin{proof}
The upper bound uses a base of $\U$ or a linked local $\pi$-base in the second component. For the lower bound, suppose $\B$ is a linked local $\pi$-base of cardinality $\tau<\min\{\chi(\U),a\}$. Replace its members by their traces on $S_0\mathbin{\dot\cup}S_1$, using Lemma~\ref{lem:traces}. Since the non-empty $S_1$-traces have cardinality less than $a$, there is $A_1\in\F$ containing none of them. For each $A_0\in\U$, locality gives $B\in\B$ with $B\subseteq A_0\cup A_1$. Its $S_1$-trace is empty, so $B\subseteq A_0$.

It follows that $\B_0=\{B\in\B:B\subseteq S_0\}$ is linked and coinitial in $\U$. Every member of $\B_0$ belongs to $\U$: otherwise its complement belongs to $\U$, and coinitiality supplies a disjoint member of $\B_0$. Hence $\B_0$ is an actual base for $\U$, contradicting $\tau<\chi(\U)$.
\end{proof}

\begin{lemma}\label{lem:ultra-space}
If $\U$ is uniform on an infinite cardinal $\mu$, then at the non-isolated point of $X(\U)$ the ordinary local $\pi$-character is $\mu$, the linked and centered local $\pi$-characters are both $\chi(\U)$, and no decreasing local $\pi$-base exists.
\end{lemma}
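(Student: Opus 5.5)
The plan is to treat $X(\U)$ as the filter space of the free filter $\U$ on $S=\mu$; $\U$ is free because it is uniform. I would prove the four assertions separately, reusing the arguments from the proof of Theorem~\ref{main:star} and from Lemma~\ref{lem:ultra-linked}.

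For the ordinary local $\pi$-character, the upper bound $\mu$ comes from the singletons $\{\xi\}$, $\xi<\mu$. These are non-empty open sets. Every $A\in\U$ is non-empty and so contains one of them, which makes the singletons coinitial. For the lower bound I would copy the star argument. Take a family $\mathcal P$ of non-empty open sets with $|\mathcal P|<\mu$ and pick a point $x_P\in P\cap\mu$ for each $P\in\mathcal P$. The set $C=\{x_P:P\in\mathcal P\}$ has $|C|<\mu$, so by uniformity $\mu\setminus C\in\U$. The neighborhood $\{p\}\cup(\mu\setminus C)$ then contains no member of $\mathcal P$. This gives $\pc=\mu$.

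For the centered value, Proposition~\ref{prop:extensions} reduces the problem to proper filters $\G\supseteq\U$. Maximality of $\U$ forces every such $\G$ to equal $\U$, so $\pcc=\chi(\U)$. For the linked value, the inequality $\pcl\leqslant\pcc$ handles the upper bound. For the lower bound I would rerun the second paragraph of the proof of Lemma~\ref{lem:ultra-linked} with no second component. By Lemma~\ref{lem:traces}, the traces of a linked local $\pi$-base $\B$ form a linked family that is coinitial in $\U$. Suppose some trace $B$ were not in $\U$. Then $\mu\setminus B\in\U$, so coinitiality gives a trace contained in $\mu\setminus B$, which is disjoint from $B$ and violates linkedness. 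Hence all traces lie in $\U$, they form a base of $\U$, and therefore $|\B|\geqslant\chi(\U)$.

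For the decreasing base, Proposition~\ref{prop:uniform} gives $\chi(\U)>\mu=|X(\U)|$, so $\pcc(p)>\max\{\omega,|X|\}$. Suppose a decreasing local $\pi$-base existed at $p$. Proposition~\ref{prop:chains} would give $\pcd(p)\leqslant\max\{\omega,|X|\}$. Since a chain is centered, $\pcd(p)\geqslant\pcc(p)$, and the two inequalities contradict each other. None of the steps is a real obstacle. The only point needing care is the linked lower bound, where a linked family must be upgraded to an actual base, and the ultrafilter complement trick does exactly this.
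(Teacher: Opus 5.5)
Your proposal is correct and follows essentially the same route as the paper: singletons plus the uniformity-complement argument give the ordinary value, the complement trick shows that a linked coinitial family consists of members of $\U$, and $\chi(\U)>\mu$ together with Proposition~\ref{prop:chains} excludes decreasing bases. The only cosmetic difference is that you get the centered value from Proposition~\ref{prop:extensions} and maximality of $\U$, whereas the paper takes a base of $\U$ as the centered upper bound and lets the linked lower bound supply the matching inequality.
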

\begin{proof}
Singletons give the ordinary upper bound. Choosing one point from each of fewer than $\mu$ proposed members and using the complement of those points in $\U$ proves the lower bound. A linked coinitial family consists of $\U$-positive sets and therefore of members of $\U$, so its cardinality is at least $\chi(\U)$. A base for $\U$ gives the matching centered upper bound. Finally, $\chi(\U)>\mu=|X(\U)|$, and Proposition~\ref{prop:chains} applies.
\end{proof}

\section{Realization and optimality}

Given a local $\pi$-base $\B$ at $p$, put
\[
V_U=\bigcup\{B\in\B:B\subseteq U\}
\]
for each open neighborhood $U$ of $p$. These sets are non-empty, open and contained in $U$, and
\[
\varnothing\ne V_{U_1\cap\cdots\cap U_n}
\subseteq V_{U_1}\cap\cdots\cap V_{U_n}.
\]
There are at most $2^{|\B|}$ possible unions. This proves Carlson's inequality \eqref{eq:carlson} for all spaces \cite[Proposition 2.8]{Carlson}, and establishes the necessity in Theorem~\ref{main:spectrum}.

\begin{proof}[Proof of Theorem~\ref{main:spectrum}, apart from the crowded assertion]
Assume $\kappa\leqslant2^\mu$, and fix $r\geqslant2$. Suppose first that $\lambda$ is regular. Choose a uniform ultrafilter $\mathcal V$ on $\nu$ with $\chi(\mathcal V)=2^\nu$, and form on disjoint ground sets
\[
\F=\F_{\mathcal V}^{(r)}\oplus\G_{\kappa,\lambda}.
\]
At the non-isolated point of $X(\F)$, Theorem~\ref{main:star}, Proposition~\ref{prop:matrix} and Lemma~\ref{lem:sum} give
\[
\pc=\pcl=\nu,\qquad\pcc=\min\{2^\nu,\kappa\}=\kappa,
\qquad\pcd=\lambda.
\]
Here the linked upper bound comes from the stars and the ordinary lower bound from the sum lemma. The stars also give an $r$-linked local $\pi$-base of cardinality $\nu$.

If $\mu=\nu$, take $X=X(\F)$. If $\mu<\nu$, choose a uniform ultrafilter $\U$ on $\mu$ with $\chi(\U)=2^\mu$ and set
\[
X=X(\U\oplus\F).
\]
Lemma~\ref{lem:sum} gives ordinary value $\mu$, centered value $\min\{2^\mu,\kappa\}=\kappa$, and decreasing value $\lambda$, since the ultrafilter component has no decreasing base. Lemma~\ref{lem:ultra-linked} gives linked value $\nu$, using $\nu<\kappa\leqslant2^\mu$. The stars in the $\mathcal V$-component retain their $r$-linked coinitial property. In both cases $X$ is zero-dimensional and Tychonoff, has cardinality $\lambda$, and has exactly one non-isolated point. All other points contribute only $\omega$ to the global invariants.

For singular $\lambda$, put $\delta=\cf(\lambda)$ and choose regular cardinals $\lambda_i$, $i<\delta$, such that
\[
\kappa<\lambda_i<\lambda,\qquad\sup_{i<\delta}\lambda_i=\lambda.
\]
They may be chosen as successor cardinals above a cofinal sequence; distinctness is not required. Let $X_i$ be the regular-case example with values $\mu,\nu,\kappa,\lambda_i$, and take the topological sum $X=\coprod_{i<\delta}X_i$. Local invariants are unchanged on passing to an open subspace containing the point: restrict a local $\pi$-base to members contained in that subspace, retaining locality and the intersection or inclusion properties; conversely, its open sets are open in the whole space. Therefore the global values are $\mu,\nu,\kappa,\lambda$, every point retains the required $r$-linked base, and
\[
|X|=\sum_{i<\delta}\lambda_i=\lambda,\qquad\niso(X)=\delta.
\]

Proposition~\ref{prop:chains} gives $\pcd(X)\leqslant\max\{\omega,|X|\}$ without any separation assumption, so $|X|\geqslant\lambda$ is necessary. At least one point is non-isolated. If $\lambda$ is singular, every pointwise decreasing local $\pi$-character is strictly below $\lambda$. Fewer than $\cf(\lambda)$ non-isolated points would give a supremum below $\lambda$, since isolated points contribute only $\omega$. This proves the claimed optimality. The crowded assertion follows from Proposition~\ref{prop:crowded} below.
\end{proof}

\begin{corollary}\label{cor:onepoint}
For infinite $\mu\leqslant\nu<\kappa<\lambda$, a Hausdorff space with exactly one non-isolated point and the values in \eqref{eq:target} exists if and only if $\kappa\leqslant2^\mu$ and $\lambda$ is regular.
\end{corollary}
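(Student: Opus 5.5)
The corollary has two directions, and both reduce to results already in hand.

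For sufficiency, assume $\kappa\leqslant2^\mu$ and that $\lambda$ is regular. Then the regular case of the proof of Theorem~\ref{main:spectrum} already gives the witness: $X(\F)$ when $\mu=\nu$, and $X(\U\oplus\F)$ when $\mu<\nu$. This space is a one-point filter space. It is zero-dimensional and Tychonoff, hence Hausdorff, it has exactly one non-isolated point, and it realizes the values in \eqref{eq:target}.

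For necessity, let $X$ be Hausdorff with exactly one non-isolated point $p$ and the values in \eqref{eq:target}. The condition $\kappa\leqslant2^\mu$ is Carlson's inequality \eqref{eq:carlson}; it holds for all spaces, as noted before the proof of Theorem~\ref{main:spectrum}. Regularity of $\lambda$ comes from the pointwise analysis.
\begin{enumerate}
\item $\pcd(X)$ is defined, since it is one of the values in \eqref{eq:target}, so every point has a decreasing local $\pi$-base.
\item At an isolated point $x$, the singleton $\{x\}$ is a decreasing local $\pi$-base, so the value there is $\omega$.
\item The global value is the supremum of the pointwise values, and only $p$ can contribute beyond $\omega$. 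Since $\lambda>\omega$, this forces $\pcd(p,X)=\lambda$.
\item Proposition~\ref{prop:chains} says a pointwise decreasing character is either $\omega$ or a regular uncountable cardinal. As $\lambda>\kappa>\omega$, $\lambda$ is regular.
\end{enumerate}

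I expect no real obstacle. The only point to watch is that the global decreasing invariant is a supremum that could in general be singular, as the remark after Proposition~\ref{prop:chains} notes. Step 3 rules this out here, because with a single non-isolated point the supremum is attained at $p$. This is also why the singular case of Theorem~\ref{main:spectrum} needed $\cf(\lambda)$ non-isolated points, consistent with the optimality argument given there.
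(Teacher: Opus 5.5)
Your proof is correct and takes the same route as the paper: sufficiency comes from the regular-$\lambda$ construction in Theorem~\ref{main:spectrum}, and necessity comes from Carlson's bound together with the pointwise regularity in Proposition~\ref{prop:chains}. Your step noting that the global decreasing value is attained at the unique non-isolated point is exactly what the paper's one-line proof leaves implicit.
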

\begin{proof}
Use Theorem~\ref{main:spectrum}, Carlson's bound, and pointwise regularity of the decreasing invariant.
\end{proof}

\begin{corollary}\label{cor:ch}
Over ZFC, failure of the continuum hypothesis is equivalent to each of the following assertions:
\begin{enumerate}
\item there is a space with $\pc=\pcl=\omega$, $\pcc=\omega_2$, and $\pcd=\omega_3$;
\item there is a space with $\pc=\omega$, $\pcl=\omega_1$, $\pcc=\omega_2$, and $\pcd=\omega_3$.
\end{enumerate}
In either case the witness can be zero-dimensional and Tychonoff, of cardinality $\omega_3$, with exactly one non-isolated point, or instead crowded.
\end{corollary}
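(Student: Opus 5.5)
The plan is to prove both directions by specializing Theorem~\ref{main:spectrum} to $\mu=\omega$, with $\nu\in\{\omega,\omega_1\}$, $\kappa=\omega_2$ and $\lambda=\omega_3$. The ordering hypothesis $\mu\leqslant\nu<\kappa<\lambda$ of that theorem is satisfied in both cases. Condition (1) of the theorem then becomes $\omega_2\leqslant2^\omega=\cont$, which is exactly the failure of the continuum hypothesis.

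\textbf{Necessity.} Suppose a space as in (1) or (2) exists. Carlson's inequality \eqref{eq:carlson} gives $\omega_2=\pcc(X)\leqslant2^{\pc(X)}=2^\omega$. Hence $\cont\geqslant\omega_2$, so CH fails. Here $\pcd$ is assumed defined, as in the paper's conventions, but this is not even needed for this direction.

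\textbf{Sufficiency.} Assume $\neg$CH, so $\omega_2\leqslant\cont=2^\mu$ with $\mu=\omega$. Apply the implication (1)$\Rightarrow$(2) of Theorem~\ref{main:spectrum} with $\nu=\omega$ for assertion (1) and with $\nu=\omega_1$ for assertion (2). Since $\lambda=\omega_3$ is regular, being a successor cardinal, the theorem supplies a zero-dimensional Tychonoff witness of cardinality $\omega_3$ with $\niso(X)=1$. The crowded alternative comes from the final clause of the theorem, which rests on Proposition~\ref{prop:crowded}. Alternatively, the one-point version also follows from Corollary~\ref{cor:onepoint}.

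The only step that needs any attention is checking that the hypotheses match. The strict inequalities $\nu<\kappa<\lambda$ hold in both cases, and regularity of $\omega_3$ is what yields exactly one non-isolated point. With those checked, the equivalences over ZFC follow, and the argument involves no genuine obstacle.
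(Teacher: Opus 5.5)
Your proposal is correct and follows essentially the same route as the paper. The paper's proof simply observes that, with $\mu=\omega$, $\nu\in\{\omega,\omega_1\}$, $\kappa=\omega_2$ and $\lambda=\omega_3$, both conditions of Theorem~\ref{main:spectrum} reduce to $\omega_2\leqslant\cont$. Your necessity step via Carlson's inequality \eqref{eq:carlson} is exactly how that theorem's necessity is obtained, and regularity of $\omega_3$ correctly gives the one-point witness.
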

\begin{proof}
Both conditions in Theorem~\ref{main:spectrum} reduce to $\omega_2\leqslant\cont$.
\end{proof}

\section{Simultaneous finite linkedness}

Fix a free ultrafilter $\U$ on $\omega$. Use disjoint copies of $E_r=[\omega]^r$, $r\geqslant2$, and put
\[
S=\bigsqcup_{r\geqslant2}E_r,\qquad
\Sigma=\{A\subseteq S:A\cap E_r\in\F_\U^{(r)}\text{ for all }r\geqslant2\}.
\]
This is a free filter on a countable set. For every fixed $r$, the stars supported on $E_r$ give a countable $r$-linked family coinitial in $\Sigma$.

\begin{proposition}\label{prop:simultaneous}
There is no countably based proper filter extending $\Sigma$. Consequently, at the non-isolated point of $X(\Sigma)$,
\[
\pc=\pcl=\omega<\pcc,
\]
and no decreasing local $\pi$-base exists.
\end{proposition}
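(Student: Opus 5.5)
We must show that no proper filter $\G\supseteq\Sigma$ has a countable base. The rest then follows from earlier results. Proposition~\ref{prop:extensions} gives $\pcc>\omega$. Proposition~\ref{prop:chains} excludes a decreasing base, since $X(\Sigma)$ is countable. The stars on $E_2$ give $\pcl\leqslant\omega$. Infinite $\pi$-character is $\omega$ by convention, which gives $\pc=\pcl=\omega$.

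Suppose $\G\supseteq\Sigma$ is proper with a countable base $\{G_n:n<\omega\}$. We may assume the base is decreasing by taking finite intersections. The natural first attempt is to find some $r$ for which $E_r$ is $\G$-positive. Then $\G_{E_r}$ is proper, and its trace on $E_r$ contains $\F_\U^{(r)}$. Theorem~\ref{main:star} (or Lemma~\ref{lem:finite} with the maps $f_i$) would then give $\chi(\U)\leqslant\chi(\G)=\omega$, contradicting Proposition~\ref{prop:uniform}.

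The main obstacle is that no single $E_r$ need be $\G$-positive. The filter $\G$ may contain every cofinite tail $\bigsqcup_{r\geqslant m}E_r$, so its mass escapes to larger and larger $r$. To handle this case we shall diagonalize using the countable base. For each $n$, the set $G_n$ meets $E_r$ for infinitely many $r$. Otherwise the complement of finitely many blocks would be disjoint from $G_n$. That complement lies in $\Sigma$ only if $\Sigma$ allows removing whole blocks, and it does not. The real point is therefore this: since $G_n\in\G\supseteq\Sigma$ is consistent with every member of $\Sigma$, the trace $G_n\cap E_r$ must be $\F_\U^{(r)}$-positive for infinitely many $r$. If instead it failed for all $r\geqslant m$, we could pick witnesses $A_r\in\F_\U^{(r)}$ disjoint from $G_n\cap E_r$ for those $r$. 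Their union, together with all of $E_r$ for $r<m$, lies in $\Sigma$, and it meets $G_n$ only inside the finitely many blocks below $m$. The same argument applied to every $G_{n'}\subseteq G_n$ forces one of finitely many blocks to be $\G$-positive, which is the first case. So we may assume that each $G_n$ has $\F_\U^{(r)}$-positive trace for infinitely many $r$.

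Next we choose distinct indices $r_n$ with $G_n\cap E_{r_n}$ positive for $\F_\U^{(r_n)}$. Positivity means that $G_n\cap E_{r_n}$ meets every $H_A^{(r_n)}$. Using a single countable base of $\U$ is not available, because $\chi(\U)>\omega$. Instead we select for each $n$ a set $A_n\in\U$ such that $H^{(r_n)}_{A_n}$ misses ``most'' of $G_n$. The aim is a member of $\Sigma$ that contains no $G_n$. To get it, use Lemma~\ref{lem:finite} on each block with the ultrafilter character. The pushforward of $(\G_{G_n\cap E_{r_n}})$ restricted to the block under some coordinate map $f_i$ is $\U$, and it has a countable base coming from $\{G_m\cap G_n\cap E_{r_n}\}_m$. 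That would already contradict $\chi(\U)>\omega$.

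So in fact the plan reduces to this. Pick $n$ and $r$ with $G_n\cap E_r$ positive for every $G_m$, $m\geqslant n$, i.e.\ with $E_r$ being $\G$-positive. Such a pair exists by the argument above, which we expect to be the delicate step. After that the first case applies.
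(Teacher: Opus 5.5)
Your handling of the case where some $E_r$ is $\G$-positive matches the paper, and your derivation of the consequences is fine. The gap is in the other case. You correctly identify it as the main obstacle, but you never resolve it.

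What your argument actually shows is that each $G_n$ has $\F_\U^{(r)}$-positive trace on infinitely many blocks. That says nothing about $\G$-positivity of any block, so the closing claim that ``such a pair exists by the argument above'' is unsupported. In fact no argument of this kind can produce a $\G$-positive block. The filter generated by $\Sigma$ together with the tails $\bigsqcup_{r\geqslant m}E_r$ is a proper extension of $\Sigma$ in which every $S\setminus E_r$ is a member. So this case has to be refuted directly from the countable base; it cannot be reduced to the first case.

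Your appeal to Lemma~\ref{lem:finite} fails for the same reason. If $E_{r_n}$ is not $\G$-positive, then $S\setminus E_{r_n}\in\G$ is disjoint from $G_n\cap E_{r_n}$. Hence $\G_{G_n\cap E_{r_n}}$ is improper, and there is no filter on the block to which the lemma could be applied. The $G_m$ carry no information on that block.

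The missing idea is a much cruder diagonalization. You were close to it when you wanted $H^{(r_n)}_{A_n}$ to miss part of $G_n$: it suffices to miss a single point. You already have distinct indices $r_n$ with $G_n\cap E_{r_n}\neq\varnothing$.
\begin{itemize}
\item Pick an edge $e_n\in G_n\cap E_{r_n}$ and put $A_n=\omega\setminus e_n$, which lies in $\U$ because $\U$ is free.
\item Define $A\subseteq S$ by $A\cap E_{r_n}=H^{(r_n)}_{A_n}$ for each $n$, and $A\cap E_r=E_r$ for all other $r$.
\item Then $A\in\Sigma\subseteq\G$, since membership in $\Sigma$ is tested block by block and the blocks $E_{r_n}$ are distinct.
\item Yet $e_n\in G_n\setminus A$ for every $n$, so no base member is contained in $A$, which is the contradiction.
\end{itemize}
This is the paper's argument. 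It needs no ultrafilter character estimate in this case, only freeness of $\U$ and the independence of the coordinates in $\Sigma$.
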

\begin{proof}
Suppose $\G\supseteq\Sigma$ is proper with a countable base. If some $E_r$ is $\G$-positive, the trace on $E_r$ of $\G_{E_r}$ is a countably based proper filter extending $\F_\U^{(r)}$. Indeed, lift a member of that coordinate filter by taking full sets in the other coordinates. This contradicts Theorem~\ref{main:star}.

Otherwise $S\setminus E_r\in\G$ for every $r$. Every member of $\G$ then meets infinitely many coordinates, since the complements of any finitely many coordinates also belong to $\G$. Enumerate a base as $\{B_k:k<\omega\}$. Choose distinct $r_k\geqslant2$ and edges $e_k\in B_k\cap E_{r_k}$. Define $A\subseteq S$ by
\[
A\cap E_{r_k}=H_{\omega\setminus e_k}^{(r_k)}\quad(k<\omega),
\]
and let its other coordinate traces be the full coordinates. Each $e_k$ is finite, so $A\in\Sigma$. But $e_k\in B_k\setminus A$ for every $k$, contradicting the base property. Proposition~\ref{prop:extensions} now gives $\pcc>\omega$, and Proposition~\ref{prop:chains} excludes decreasing bases.
\end{proof}

\begin{proof}[Proof of Theorem~\ref{main:simultaneous}]
For regular $\lambda>\omega_1$, form $X(\Sigma\oplus\G_{\omega_1,\lambda})$. The coordinate stars give a countable $r$-linked local $\pi$-base for every finite $r$. The centered value is exactly $\omega_1$ by Lemma~\ref{lem:sum} and Proposition~\ref{prop:simultaneous}, and the decreasing value is $\lambda$. The space has size $\lambda$ and one non-isolated point. For singular $\lambda$, take a topological sum over $\cf(\lambda)$ regular decreasing values cofinal in $\lambda$, as in Theorem~\ref{main:spectrum}. The same optimality argument applies. Proposition~\ref{prop:crowded} supplies the crowded versions.
\end{proof}

\section{Crowded realizations and further questions}

The isolated points in the preceding constructions can be replaced without changing the local invariants at the distinguished point. This gives crowded examples without making any claim of countable compactness or compactness.

\begin{proposition}\label{prop:crowded}
Let $\F$ be a free filter on an infinite set $S$. There is a crowded zero-dimensional Tychonoff space $Y$ of cardinality $|S|$, with a point $p$, such that all four local $\pi$-characters at $p$ agree with those at the non-isolated point of $X(\F)$, including existence of the decreasing invariant. The existence of an $r$-linked local $\pi$-base of any given infinite size bound is preserved for each finite $r$. At every other point of $Y$, all four invariants are $\omega$.
\end{proposition}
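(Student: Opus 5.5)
The plan is to keep the filter structure at $p$ and blow up each isolated point into a copy of a crowded, first countable, zero-dimensional space. I would take the rationals $\mathbb Q$ and set
\[
Y=\{p\}\cup(S\times\mathbb Q).
\]
The set $S\times\mathbb Q$ is open in $Y$ and carries the product topology of discrete $S$ with $\mathbb Q$. Basic neighborhoods of $p$ are $\{p\}\cup(A\times\mathbb Q)$ for $A\in\F$.

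\textbf{Routine properties.}
\begin{itemize}
\item Cardinality: $|Y|=|S|\cdot\omega=|S|$ because $S$ is infinite.
\item Zero-dimensional and Tychonoff: the basic neighborhoods of $p$ are clopen, since the complement of $\{p\}\cup(A\times\mathbb Q)$ is the open set $(S\setminus A)\times\mathbb Q$. Sets $\{s\}\times W$ with $W$ clopen in $\mathbb Q$ are clopen as well. Points are separated, since freeness gives a filter member omitting any given $s$. A $T_1$ space with a clopen base is Tychonoff.
\item Crowded: no point of $\mathbb Q$ is isolated. The point $p$ is not isolated because $\F$ is proper, so every neighborhood of $p$ meets $S\times\mathbb Q$.
\item Other points: each point of $S\times\mathbb Q$ has a countable decreasing neighborhood base, and a neighborhood base is a local $\pi$-base. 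So all four invariants equal $\omega$ there.
\end{itemize}

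\textbf{Transfer at $p$, from $X(\F)$ to $Y$.} I would show that the two spaces admit the same families of local $\pi$-bases at $p$ up to a size-preserving translation. Given a local $\pi$-base $\B$ at $p$ in $X(\F)$, by Lemma~\ref{lem:traces} we may use traces. Send $B$ to $(B\cap S)\times\mathbb Q$.
\begin{itemize}
\item This map is injective on traces.
\item It commutes with finite intersections, so linkedness, $r$-linkedness, centeredness and inclusion chains are preserved.
\item Coinitiality in $\F$ translates directly into locality at $p$ in $Y$.
\end{itemize}

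\textbf{Transfer at $p$, from $Y$ to $X(\F)$.} For a non-empty open $P\subseteq Y$, let
\[
\pi(P)=\{s\in S: P\cap(\{s\}\times\mathbb Q)\neq\varnothing\}.
\]
\begin{itemize}
\item $\pi(P)$ is non-empty because $P$ meets $S\times\mathbb Q$, and it is open in $X(\F)$ because it consists of isolated points.
\item $\pi$ is monotone, so chains go to chains. We may pass to a strictly decreasing subchain afterwards as in Proposition~\ref{prop:chains}, or simply note that a family which is a chain under inclusion still witnesses the decreasing invariant.
\item If $P_1\cap\dots\cap P_n\ne\varnothing$, then it contains a point of $S\times\mathbb Q$ whose first coordinate lies in $\bigcap\pi(P_i)$. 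Hence linkedness, $r$-linkedness and centeredness pass to the projected family.
\item If $P\subseteq\{p\}\cup(A\times\mathbb Q)$, then $\pi(P)\subseteq A$. Hence a local $\pi$-base in $Y$ projects to a family coinitial in $\F$, which is a local $\pi$-base at $p$ in $X(\F)$ by Lemma~\ref{lem:traces}.
\item The projected family has cardinality at most that of the original.
\end{itemize}
Both translations therefore preserve each of the four invariants, including existence of a decreasing base. They also preserve, for every finite $r$, the existence of an $r$-linked local $\pi$-base of a given size bound.

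\textbf{Main obstacle.} The only delicate point is the backward direction. The projection $\pi$ does not commute with intersections, only $\pi(P_1\cap P_2)\subseteq\pi(P_1)\cap\pi(P_2)$ holds. I expect this inclusion to be exactly enough, because every intersection property in question only asks for non-emptiness. Chains need only monotonicity.
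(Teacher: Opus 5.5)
Your proposal is correct and follows the paper's proof essentially step for step. The paper also replaces each $s\in S$ by a copy $\mathbb Q_s$ of the rationals, transfers local $\pi$-bases forward by sending a trace $T$ to $\bigcup_{s\in T}\mathbb Q_s$, and transfers them backward via the support map, using only that a non-empty finite intersection is open and hence meets some fiber. One detail to make explicit: closedness of $\{s\}\times W$ in $Y$, i.e.\ $p\notin\overline{\{s\}\times W}$, requires $S\setminus\{s\}\in\F$ (freeness), which is how the paper justifies that each fiber is clopen.
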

\begin{proof}
Replace each $s\in S$ by a disjoint copy $\mathbb Q_s$ of the rationals, and adjoin $p$ with basic neighborhoods
\[
\{p\}\cup\bigcup_{s\in A}\mathbb Q_s\qquad(A\in\F).
\]
Each fiber is clopen, since $S\setminus\{s\}\in\F$, and each displayed neighborhood is clopen. The clopen bases of the rational fibers therefore give a clopen base for $Y$. The space is Hausdorff, hence zero-dimensional Tychonoff; it is crowded and has cardinality $|S|$.

For a non-empty open set $B\subseteq Y$, let
\[
\operatorname{supp}(B)=\{s\in S:B\cap\mathbb Q_s\ne\varnothing\}.
\]
Every non-empty open set meets a fiber. The support map sends a local $\pi$-base at $p$ to a coinitial family in $\F$, preserves inclusions, and preserves each finite intersection property under consideration: a non-empty finite intersection is open and meets some fiber. Conversely, take traces of a local $\pi$-base in $X(\F)$ on $S$ and replace each trace $T$ by $\bigcup_{s\in T}\mathbb Q_s$. Lemma~\ref{lem:traces} and the definition of the neighborhoods show that locality and all the required properties are preserved. These two operations prove the equalities. Points inside the open rational fibers are first countable and have all four invariants equal to $\omega$.
\end{proof}

Apply this replacement to each one-point component in the proofs of Theorems~\ref{main:spectrum} and~\ref{main:simultaneous}. This proves their crowded assertions. In particular, the existence results are not confined to spaces whose non-isolated part is small, although the crowded realizations do not have the optimal number of non-isolated points.

The finite-sum argument and the simultaneous construction leave two distinct questions which are not decided by the present methods.

\begin{question}
Suppose $\omega_1<\kappa\leqslant\cont$ and $\lambda>\kappa$ is regular. Is there a Hausdorff space with $\pc=\pcl=\omega$, $\pcc=\kappa$, and $\pcd=\lambda$, such that every point has a countable $r$-linked local $\pi$-base for every finite $r$? Can this be achieved with one non-isolated point?
\end{question}

\begin{question}
Which of the tuples in Theorem~\ref{main:spectrum} can be realized by countably compact Hausdorff spaces? Which can be realized by compact Hausdorff spaces?
\end{question}

\section*{Acknowledgments}
The author thanks Tomasz Kania for explaining the problem and for sharing an early version of his paper \emph{Three local $\pi$-characters}.

\bigskip\bigskip

\textbf{Data Availability}:  Data sharing not applicable to this article as no datasets were generated or analyzed during
the current study.

\textbf{Declarations}

\bigskip\bigskip

\textbf{Statements and Declarations}:  The author declare that no support was received during the preparation of

\textbf{Conflict of interest}: There is no conflict of interest.

\end{document}